\newtheorem{theorem}{Theorem}
\newtheorem{lemma}{Lemma}
\newtheorem{question}{Question}
\newtheorem{remark}{Remark}
\theoremstyle{definition}
\newcommand{\e}[2][]{\mathbb{E}_{#1}\left[#2\right]}
\newcommand{\p}{{\mathbb{P}}}
\DeclareMathOperator{\vol}{vol}
\newcommand{\eps}{\epsilon}
\newcommand{\ba}{\backslash}
\newcommand{\N}{\ensuremath{\mathbb{N}}}
\newcommand{\R}{\ensuremath{\mathbb{R}}}
\def\Ddots{\mathinner{\mkern1mu\raise\p@
\vbox{\kern7\p@\hbox{.}}\mkern2mu
\raise4\p@\hbox{.}\mkern2mu\raise7\p@\hbox{.}\mkern1mu}}
\newcommand{\paren}[1]{\ensuremath{\left( #1 \right)}}
\newcommand{\ceil}[1]{\ensuremath{\left\lceil #1 \right\rceil}}
\newcommand{\diam}[1]{\hbox{diam}(#1)}
\newcommand{\bigTheta}[1]{\ensuremath{\Theta\!\paren{#1}}}
\newcommand{\lilOh}[1]{\ensuremath{\mathit{o}\!\paren{#1}}}
\newcommand{\bigOh}[1]{\ensuremath{\mathcal{O}\!\left( #1 \right)}}
\newcommand{\bigOmega}[1]{\ensuremath{\Omega\!\left( #1\right)}}
\begin{document}

\title{On expansion of $G_{n, d}$ with respect to $G_{m, d}$}
\author{Ioana Dumitriu}
\address{University of Washington, Department of Mathematics}
\email{dumitriu@math.uw.edu}
\author{Mary Radcliffe}

\email{radcliffe@math.uw.edu}

\begin{abstract}
In several works, Mendel and Naor have introduced and developed theory surrounding a nonlinear expansion constant similar to the spectral gap for sequences of graphs, in which one considers embeddings of a graph $G$ into a metric space $X$ \cite{mendel2010towards, mendel2013nonlinear, mendel2014expanders}. Here, we investigate the open question of whether the random regular graph $G_{n, d}$ is an expander when embedded into the metric space of a random regular graph $G_{m, d}$  a.a.s., where $m\leq n$. We show that if $m$ is fixed, the answer is affirmative. In addition, when $m\to \infty$, we provide partial solutions to the problem in the case that $d$ is fixed or that $d\to \infty$ under the constraint $d=\lilOh{m^{1/2}}$.
\end{abstract}
\keywords{Random regular graph, expander graph, spectral graph theory, nonlinear eigenvalues, discrepancy, metric embeddings}
\maketitle

\section{Introduction}
Since the 1970s, expander graphs have been a subject of interest in both mathematics and computer science. Applications for expanders have been found in algorithms, error correcting codes, network analysis, group theory, geometry, topology, and many other areas. For some excellent surveys of the properties of expanders, see \cite{hoory2006expander}, \cite{lubotzky2010discrete}, or \cite{lubotzky2012expander}.

Although there are many different (equivalent) ways to define expansion in a graph, we shall work with the following notion. Fix $d\in \N$, and let $\{G_n\}$ be a sequence of $d$-regular graphs such that $|V(G_n)|\to \infty$ as $n\to\infty$. Then $\{G_n\}$ is an expander sequence if there exists a constant $K$ such that, for every family of functions $f_n: V(G_n)\to \R$, 
\[ 
\frac{1}{dn} \sum_{u\sim_G v} |f_n(u)-f_n(v)|^2 \geq \frac{K}{n^2}\sum_{u, v\in V(G)} |f_n(u)-f_n(v) |^2
\]
asymptotically almost surely.

This is equivalent to the standard notion of bounding the first eigenvalue of the normalized Laplacian away from zero, by noting that
\begin{equation}\label{E:lambda1}
\lambda_1(G) = \inf_{f:V(G)\to \R}\frac{\vol{G}\sum\limits_{u\sim_Gv}(f(u)-f(v))^2}{\sum\limits_{u, v\in V(G)} (f(u)-f(v))^2d_ud_v}~,
\end{equation}
and in the case of a $d$-regular graph, $d_u=d_v=d$ and $\vol{G}=\frac{nd}{2}$ (see, for example, \cite{chung1997spectral}).

 As noted by Mendel and Naor in \cite{mendel2014expanders}, we automatically have the inequality
\begin{equation}\label{upperbd}
\frac{1}{dn} \sum_{u\sim_G v} |f(u)-f(v)|^2 \leq \frac{4}{n^2}\sum_{u, v\in V(G)} |f(u)-f(v)|^2~,
\end{equation}
for every function $f:V(G_n)\to \R$, and thus we may say that $G_n$ is an expander family if, for any sequence of functions $f_n:V(G_n)\to \R$, the average distance between function values in $G_n$ can be estimated up to a universal constant by the average distance of function values considered only along edges in $G_n$.

We shall consider here a generalization of this definition, seen previously in \cite{mendel2010towards}, \cite{mendel2013nonlinear}, \cite{mendel2014expanders}. Specifically, we can replace $\R$ and the $\ell^2$ metric above with any arbitrary metric space $(X, d_X)$. Even in this context, inequality (\ref{upperbd}) is still sure to hold (see inequality (2) in \cite{mendel2014expanders}). For a given $d$-regular graph $G$, define $\gamma(G, d_X)$ to be the infimum over all constants $\gamma$ such that for all functions $f:V(G)\to X$, the inequality
\[ \frac{\gamma}{dn} \sum_{u\sim_G v} d^2_X(f_n(u), f_n(v)) \geq \frac{1}{n^2}\sum_{u, v\in V(G)} d_X^2(f_n(u), f_n(v))\]
holds. We shall say that a family $G_n$ is an expander family with respect to a metric space $(X, d_X)$ if there exists a constant $C$ such that $\gamma(G_n, d_X)\leq C$ for all $n$.

Although we shall deal almost exclusively with regular graphs in this work, we can adapt the definition above for irregular graphs as follows. For a given graph $G$, define $\gamma(G, d_X)$ to be the infimum over all constants $\gamma$ such that for all functions $f:V(G)\to X$, the inequality
\[{\gamma}\vol{G} \sum_{u\sim_G v} d^2_X(f(u), f(v)) \geq \sum_{u, v\in V(G)} d_X^2(f(u), f(v))d_ud_v\]
holds. 

We note here that there is an equivalent notion for defining $\lambda_1(G, d_X)$. Indeed, as written, we have that $\lambda_1(G, d_\R) = \frac{1}{\gamma(G, d_\R)}$, and thus bounding $\gamma$ from above is equivalent to bounding $\lambda_1$ from below. Indeed, it may be a fruitful avenue of investigation to consider the extensions of $\lambda_1(G, d_X)$ to higher eigenvalues and applications to graph theoretic properties. However, in order to remain consistent with \cite{mendel2014expanders} and others, we will work throughout this paper with the notation $\gamma(G, d_X)$.

The nonlinear spectral gap $\gamma(G, d_X)$ has primarily been studied in the geometrical context of embeddings (see, for example, \cite{bartal2003metric, lafforgue2008renforcement, lee2007nonlinear, linial1995geometry, matouvsek1997embedding}). Indeed, one can view the spectral gap in this context as giving a bound on the distortion of an embedding of the metric space (a.k.a.\ graph) $G$ into the metric space $X$. The smaller $\gamma$ is, the more accurate such an embedding will be in preserving distances from the original graph. Much of the existing work in this topic has been on dimensionality of the space $X$, in particular when $X=\R^k$ under the $\ell^p$ metric, rather than on expansion properties of the graph $G$ (see, for example, \cite{indyk2000stable, lee2007nonlinear, matouvsek1997embedding}).

If the $G_n$ are a sequence of random graphs, we say that $G_n$ is an expander with respect to $(X, d_X)$ a.a.s.\ if there exists a constant $C$ such that $\p(\gamma(G_n, d_X)\leq C)\to 1$ as $n\to\infty$. In \cite{mendel2014expanders}, Mendel and Naor consider the question of when a classical (random) expander family is an expander with respect to an arbitrary metric space $(X, d_X)$. In particular, the authors study the expansion of a graph with respect to another graph. That is to say, the metric space $(X, d_X)$ will be a graph $H$, with $d_H(u, v)$ the standard graph-theoretic distance. In section 2.1 of \cite{mendel2014expanders}, the authors pose the following question due to Jon Kleinberg, regarding random regular graphs:

\begin{question} Does there exist a universal constant $C>0$ such that for $G\in \mathcal{G}_{n, 3}$ and $H\in \mathcal{G}_{m, 3}$ random 3-regular graphs on $n$ and $m$ vertices, respectively,
\[\lim_{m, n\to \infty} \p(\gamma(G, d_H)\leq C) = 1?\]
\end{question}

In \cite{mendel2014expanders} the authors show that such a $K$ can be
found when $m \geq n$, and provide a partial answer when $m=n$ by
considering only permutation functions. We provide here a more general
(yet still partial) set of answers in the case $m \leq n$ and having
replaced $3$ with an arbitrary regularity constant $d$, for a whole
set of growth parameters and types of functions (see the table at the
end of this section).


To do so, we introduce the following notation. For graphs $G$ and $H$, let $n=|V(G)|$ and $m=|V(H)|$. For $f:V(G)\to V(H)$, let $\gamma(G, d_H, f)$ be such that 
\[ \frac{\gamma(G, d_H, f)}{dn} \sum_{u\sim_G v} d^2_H(f(u),f(v)) = \frac{1}{n^2}\sum_{u, v\in V(G)} d_H^2(f(u),f(v)).\]

Thus, $\gamma(G, d_H)= \displaystyle\sup_{f:V(G)\to V(H)} \gamma(G, d_H, f)$. For any $\delta$, we shall say that $f$ is in class $\mathcal{F}_{G, H}(\delta)$ if, for all $i\in V(H)$, we have $|f^{-1}(i)|\leq \frac{n}{\delta}$. Note that as the expected size of $|f^{-1}(i)|$ is $\frac{n}{m}$ if $f$ is chosen randomly, this can be seen as a way to determine how ``atypical'' the function $f$ is. We shall frequently write $\mathcal{F}(\delta)$ when the graphs $G$ and $H$ are understood. Our results then fall into two categories: results bounding $\gamma(G, d_H, f)$ for all ``typical'' functions $f$, and results bounding $\gamma(G, d_H, f)$ for a predetermined but arbitrary function $f$.

\begin{remark} We will use the notation $G_n$ for a random graph $G_n
  \in \mathcal{G}_{n,d}$, respectively $H_m$ if $H_m \in
  \mathcal{G}_{m,d}$. However, if the size of the graphs is clear from
  the context, we will drop the indices and use the notations $G$ and
  $H$ instead. Similarly, when we speak about sequences of functions
  $f_n ~:~ V(G) ~\rightarrow~V(H)$, the functions will be
  deterministic and map a set of $n$ labels into a set of $m$ labels,
  for each given $n,m$.
\end{remark}

For typical functions, we have the following three theorems, depending on the relationships between $m$, $n$, and $d$.

\begin{theorem}\label{T:typicalfixed}
Let $m$ and $d$ be fixed positive integers, let $H$ be any fixed, connected $d$-regular graph on $m$ vertices, and let $G\in \mathcal{G}_{n, d}$. Then there exists a constant $C$ such that $$\p(\gamma(G, d_H)\leq C)\to 1~.$$
\end{theorem}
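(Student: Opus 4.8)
The plan is to reduce the metric nonlinear spectral gap $\gamma(G, d_H)$ to the ordinary (real-valued, $\ell^2$) spectral gap, for which we already know $\gamma(G, d_\R) = 1/\lambda_1(G)$, exploiting crucially that $H$ is fixed so that $\diam{H}$ is a constant. First I would fix any nonconstant $f:V(G)\to V(H)$ and encode it by its indicator functions $\chi_i:V(G)\to\{0,1\}$, $\chi_i(u)=\one[f(u)=i]$, for $i\in V(H)$. Writing $n_i=|f^{-1}(i)|$, a direct computation gives $\sum_{u,v}(\chi_i(u)-\chi_i(v))^2 = 2n_i(n-n_i)$. Moreover $\sum_i(\chi_i(u)-\chi_i(v))^2$ equals $2$ when $f(u)\neq f(v)$ and $0$ otherwise, so summing over $i$ yields $\sum_i\sum_{u,v}(\chi_i(u)-\chi_i(v))^2 = 2\sum_{i\neq j}n_in_j$ and $\sum_i\sum_{u\sim_G v}(\chi_i(u)-\chi_i(v))^2 = 2\sum_{u\sim_G v}\one[f(u)\neq f(v)]$.

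The key step is to apply the variational characterization of $\lambda_1(G)$ to each coordinate $\chi_i$. Since $\gamma(G, d_\R)=1/\lambda_1(G)$, for every real $g$ we have $\sum_{u\sim_G v}(g(u)-g(v))^2 \geq \lambda_1(G)\frac{d}{n}\sum_{u,v}(g(u)-g(v))^2$. Applying this with $g=\chi_i$ and summing over $i\in V(H)$, the common factor $2$ cancels and I obtain the cut lower bound
\[ \sum_{u\sim_G v}\one[f(u)\neq f(v)] \geq \lambda_1(G)\frac{d}{n}\sum_{i\neq j}n_in_j. \]
Now I sandwich the two $d_H^2$-weighted sums. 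Because $H$ is connected, $d_H^2(i,j)\geq 1$ for $i\neq j$ and $d_H^2(i,i)=0$, so the denominator satisfies $\sum_{u\sim_G v}d_H^2(f(u),f(v)) \geq \sum_{u\sim_G v}\one[f(u)\neq f(v)]$; and $d_H^2(i,j)\leq \diam{H}^2$ bounds the numerator as $\sum_{u,v}d_H^2(f(u),f(v)) = \sum_{i\neq j}n_in_j\,d_H^2(i,j)\leq \diam{H}^2\sum_{i\neq j}n_in_j$. Combining these with the cut lower bound, the factors $\frac{d}{n}\sum_{i\neq j}n_in_j$ cancel and I get, uniformly over all nonconstant $f$,
\[ \gamma(G, d_H, f) = \frac{d}{n}\cdot\frac{\sum_{u,v}d_H^2(f(u),f(v))}{\sum_{u\sim_G v}d_H^2(f(u),f(v))} \leq \frac{\diam{H}^2}{\lambda_1(G)}. \]
Taking the supremum over $f$ gives the deterministic bound $\gamma(G, d_H)\leq \diam{H}^2/\lambda_1(G)$.

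Finally I would invoke the fact that a random $d$-regular graph is a.a.s.\ a good spectral expander: for fixed $d\geq 3$, a.a.s.\ $\lambda_1(G)\geq c$ for some constant $c=c(d)>0$ (e.g.\ via Friedman's theorem, $\lambda_2(A)\leq 2\sqrt{d-1}+\lilOh{1}$ a.a.s., whence $\lambda_1=1-\lambda_2(A)/d$ is bounded below). Setting $C=\diam{H}^2/c$, which is a constant since $H$ and $d$ are fixed, yields $\p(\gamma(G, d_H)\leq C)\to 1$.

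I expect the only genuine work to be bookkeeping: verifying the indicator identities with the correct combinatorial constants, treating the degenerate constant-$f$ case (where both sums vanish), and citing the a.a.s.\ lower bound on $\lambda_1(G)$. The conceptual crux — and the place I would be most careful — is the coordinate (tensorization) trick, which converts $d_H^2$-distortion into ordinary Dirichlet energy at the cost of only the multiplicative factor $\diam{H}^2$; this is harmless precisely because $m$ and $H$ are fixed, and it genuinely relies on both $\diam{H}<\infty$ and the uniform lower bound $d_H^2\geq 1$ on off-diagonal distances.
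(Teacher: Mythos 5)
Your proof is correct, and it takes a genuinely different route from the paper's. The paper composes $f$ with the Bourgain embedding $g:V(H)\to\R^K$ of Theorem \ref{T:Bourgain} and applies the vector-valued Poincar\'e inequality of Lemma \ref{T:Hilbert} to $F=g\circ f$, together with the a.a.s.\ spectral gap of Theorem \ref{randomeigen}, obtaining $\gamma(G,d_H)\leq C^2\log^2 m/(\alpha c^2)$. You instead embed $V(H)$ into $\R^m$ by indicator vectors: your coordinate-wise application of the scalar Rayleigh-quotient bound to the $\chi_i$ is precisely the $s=m$ case of Lemma \ref{T:Hilbert}, and your sandwich $\one[i\neq j]\leq d_H^2(i,j)\leq \diam{H}^2$ replaces Bourgain's $O(\log^2 m)$-distortion embedding by the trivial one-hot embedding, whose distortion with respect to $d_H$ is only $\diam{H}$. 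What your route buys is elementarity: no Bourgain's theorem, an explicit constant $\diam{H}^2/\lambda_1(G)$, and an explicit treatment of the degenerate constant-$f$ case (which the paper glosses over). What the paper's route buys is better dependence on $m$: $\diam{H}$ can be $\Theta(m)$ even for $3$-regular $H$ (e.g.\ a circular ladder), so your constant can be polynomial in $m$ where the paper's is polylogarithmic --- immaterial here since $m$ is fixed, but relevant to any extension in which $m\to\infty$. One caveat, shared equally by the paper's proof: the a.a.s.\ lower bound $\lambda_1(G)\geq c(d)>0$ (Friedman) requires $d\geq 3$, so both arguments silently exclude $d\in\{1,2\}$, where the statement in fact fails ($G$ is then a.a.s.\ disconnected --- for $d=2$ a union of many cycles --- and an $f$ that is constant on components but not globally constant makes the edge sum vanish while the double sum does not).
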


In \cite{mendel2014expanders}, a proof of this theorem in the case $d=3$ is mentioned and fairly briefly sketched. We extend this proof to arbitrary $d$, and include here a complete proof, relying on Bourgain's Embedding Theorem.

\begin{theorem}\label{T:typical}
For any $\eps>0$ and $d$ fixed, there exists a constant $C$ such that,
if $m\leq n$ and $m,n \to \infty$\footnote{This
  theorem may be of particular interest when $m$ grows poly-logarithmically with $n$.}, then
\[\p\left(\gamma(G, d_H, f)\leq C \hbox{ for all }f\in \mathcal{F}(m^{\frac{1}{2}+\frac{2}{d^2}+\eps})\right)\to 1,\]
where the probability is taken over pairs $(G, H)\in \mathcal{G}_{n, d}\times\mathcal{G}_{m, d}$, and the limit is taken as $n\to\infty$.
\end{theorem}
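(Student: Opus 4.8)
The plan is to control the ratio directly. From the defining equation for $\gamma(G, d_H, f)$ we have
\[ \gamma(G, d_H, f) = \frac{d\sum_{u,v\in V(G)}d_H^2(f(u),f(v))}{n\sum_{u\sim_G v}d_H^2(f(u),f(v))}, \]
so it suffices to bound the numerator above and the denominator below. Writing $a_i=|f^{-1}(i)|$, the numerator equals $\sum_{i,j}a_ia_j\,d_H^2(i,j)\le n^2\,\diam{H}^2$, and since $H\in\mathcal{G}_{m,d}$ we have $\diam{H}=\bigOh{\log_{d-1}m}$ a.a.s., a standard fact about random regular graphs. Thus the numerator is $\bigOh{n^2(\log_{d-1}m)^2}$.

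For the denominator, the idea is that for a random $G$ almost every edge joins two vertices whose $f$-images are far apart in $H$. Fix $r=\floor{\beta\log_{d-1}m}$ for a constant $\beta>0$ to be chosen, and call an edge $uv$ of $G$ \emph{close} if $d_H(f(u),f(v))\le r$. Every $d$-regular graph obeys the deterministic ball bound $|\set{j:d_H(i,j)\le r}|\le C_d(d-1)^r$, so for $f\in\mathcal{F}(\delta)$ the number of vertex-pairs that could support a close edge is
\[ |T_r| := \sum_{d_H(i,j)\le r}a_ia_j \le \sum_i a_i\cdot C_d(d-1)^r\cdot\max_j a_j \le \frac{C_d\,n^2(d-1)^r}{\delta}, \]
where the last step uses $a_i\le n/\delta$. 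Hence the expected number of close edges is $\bigOh{dn(d-1)^r/\delta}$, a vanishing fraction of the $dn/2$ edges once $(d-1)^r=\lilOh{\delta}$.

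The key step is to upgrade this expectation bound to a statement holding simultaneously for all $f\in\mathcal{F}(\delta)$. For fixed $f$, the number $C_r(G,f)$ of close edges is the number of edges of $G$ lying inside a prescribed pair-set of size $|T_r|$, and a configuration-model (or switching) computation gives the upper-tail estimate
\[ \prob{C_r(G,f)\ge t}\le \binom{|T_r|}{t}\left(\frac{Cd}{n}\right)^{t}\le\left(\frac{C'(d-1)^r}{\alpha\,\delta}\right)^{t}\qquad\text{for }t=\alpha\,dn. \]
Taking $\alpha=\tfrac12$, $(d-1)^r=m^{\beta+\lilOh{1}}$, and $\delta=m^{1/2+2/d^2+\eps}$, the exponent is $-\bigTheta{dn\paren{\tfrac12+\tfrac{2}{d^2}+\eps-\beta}\log m}$, which beats the union bound $\log|\mathcal{F}(\delta)|\le n\log m$ precisely when $\tfrac{d}{2}\paren{\tfrac12+\tfrac{2}{d^2}+\eps-\beta}>1$. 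Since $\tfrac12+\tfrac{2}{d^2}-\tfrac2d=\tfrac{(d-2)^2}{2d^2}\ge0$, there is a positive constant $\beta$ in the admissible window for every $d\ge3$ and $\eps>0$; this is exactly what the exponent $\tfrac12+\tfrac{2}{d^2}$ in the definition of $\mathcal{F}(\delta)$ is calibrated to guarantee. Consequently, a.a.s.\ at least half the edges of $G$ are not close for \emph{every} $f\in\mathcal{F}(\delta)$, so the denominator is $\ge\tfrac12\cdot\tfrac{dn}{2}\cdot r^2=\bigOmega{dn(\log_{d-1}m)^2}$, and combined with the numerator bound this yields $\gamma(G,d_H,f)\le 4\,\diam{H}^2/r^2=\bigOh{1/\beta^2}=:C$, a constant independent of $m,n$.

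The main obstacle is this uniform upper-tail estimate. A worst-case per-edge bound $d_H^2\le\diam{H}^2=\bigOh{(\log m)^2}$ fed into Azuma's inequality only yields failure probability $e^{-\bigTheta{dn}}$, which cannot beat the entropy $n\log m$ of the function class when $d$ is fixed and $m\to\infty$. One must instead exploit that close edges are confined to the small prescribed pair-set $T_r$ and count the $G$-edges inside it via the configuration model, so that the failure exponent acquires the extra factor $\log(\delta/(d-1)^r)$. Making this estimate clean, and verifying that the admissible window for $\beta$ remains open down to small $d$, is the crux; the diameter control on $H$ and the deterministic ball bound are comparatively routine.
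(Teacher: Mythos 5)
Your overall strategy is essentially the paper's: bound the numerator by $n^2\diam{H}^2$, declare an edge ``close'' when the images are within distance $r=\Theta(\log_{d-1}m)$ in $H$, bound the set of close vertex-pairs using ball growth in $H$ together with the $\mathcal{F}(\delta)$ hypothesis, prove an upper-tail estimate for the number of $G$-edges landing in that pair-set via the configuration model, and finish with a union bound over the at most $m^n$ functions. The technical implementation differs: the paper exposes edges sequentially, couples with i.i.d.\ Bernoulli variables, and applies Chernoff, whereas you use the union-over-subsets bound $\binom{|T_r|}{t}\left(Cd/n\right)^t$; that bound is legitimate (the probability that $t$ prescribed vertex-pairs are simultaneously edges is at most $(ed/n)^t$ in the configuration model), and your count $|T_r|\leq C_d n^2 (d-1)^r/\delta$ is actually sharper than the paper's $n^2 m d^{\alpha D}/\delta^2$, since it pays the $n/\delta$ cap only once rather than paying it twice and multiplying by $m$.

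There is, however, a genuine gap in the final step. With $t=\alpha dn$ and $\alpha=\tfrac12$, you have $t=dn/2$, which is the \emph{total} number of edges of $G$; the event $C_r(G,f)\geq t$ is then ``every edge is close,'' and ruling it out only shows that at least one edge is far --- not that half the edges are far, which is what your lower bound on the denominator requires. To get a constant fraction of far edges you must take $t$ strictly below $dn/2$, and this interacts with the window for $\beta$: the naive choice $\alpha=\tfrac14$ requires $\tfrac{d}{4}\left(\tfrac12+\tfrac{2}{d^2}+\eps-\beta\right)>1$, which for $d=3$ forces $\eps>\tfrac{11}{18}$ and so fails for small $\eps$. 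The repair is to take $\alpha=\tfrac12-c_0$ with $c_0>0$ small (depending on $d$ and $\eps$): then a.a.s.\ at least $2c_0\cdot\tfrac{dn}{2}$ edges are far for every $f\in\mathcal{F}(\delta)$, giving $\gamma\leq O\!\left(1/(c_0\beta^2)\right)$, and the union-bound condition $\left(\tfrac12-c_0\right)d\left(\tfrac12+\tfrac{2}{d^2}+\eps-\beta\right)>1$ still admits positive $c_0,\beta$ precisely because $\tfrac{d}{2}\left(\tfrac12+\tfrac{2}{d^2}\right)-1=\tfrac{(d-2)^2}{4d}$ is \emph{strictly} positive for $d\geq 3$. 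So your calibration remark identifies the right quantity, but it is the strict inequality $(d-2)^2>0$, not merely $(d-2)^2\geq 0$, that leaves room to push $\alpha$ below $\tfrac12$; as written, your conclusion does not follow from your tail estimate. (The paper's own parameter choices $\eta=\beta=\tfrac12-\delta$ with $\delta>0$ small are exactly this repair in its notation.)
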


The proof of this theorem is an adaptation of techniques used in \cite{mendel2014expanders} to prove a very special case. A further adaptation yields the following result when $d$ is not fixed, but tends to infinity with $n$ as well.

\begin{theorem}\label{T:typicaldinf}
Let $G\in\mathcal{G}_{n, d}$, $H\in\mathcal{G}_{m, d}$, where $m, n$, and
$d$ all tend to $\infty$ in such a way that $m \leq n$ and $d =
o(m^{1/2})$. Let $\eps>0$ be a constant, and let $\{\delta_m\}$ be a
sequence of numbers such that 
 \[\delta_m\gg
 \frac{2\sqrt{2e}}{d}m^{\frac{1}{2}+\frac{4}{d+1}+\epsilon}~
\]
%
then $\p(\gamma(G, d_H, f)\leq C \hbox{ for all } f\in \mathcal{F}(\delta_m))\to 1$ as $n\to \infty$. 
\end{theorem}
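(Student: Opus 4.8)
The plan is to start from the closed form $\gamma(G,d_H,f)=\frac{d}{n}\cdot\frac{\sum_{u,v\in V(G)}d_H^2(f(u),f(v))}{\sum_{u\sim_G v}d_H^2(f(u),f(v))}$ implied by the defining identity, and to bound the numerator from above and the denominator from below, uniformly over $f\in\mathcal{F}(\delta_m)$. Since $G$ and $H$ are drawn independently, I would first condition on the event that $H$ is typical, which holds a.a.s.: a random $H\in\mathcal{G}_{m,d}$ has $\diam{H}=\bigTheta{\log_{d-1}m}$ and balls of controlled size, $|B_r(i)|\le(1+\lilOh{1})(d-1)^r$ for every $i\in V(H)$ and every $r\le\diam{H}$. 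On this event the numerator is bounded above by the trivial estimate $\sum_{u,v}d_H^2(f(u),f(v))\le n^2\,\diam{H}^2=\bigOh{n^2(\log_{d-1}m)^2}$, so the entire difficulty is concentrated in a lower bound on the edge-denominator.

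Fix the threshold radius $r=\ceil{\tfrac12\log_{d-1}m}$, a constant fraction of the diameter, and call an edge $uv$ of $G$ \emph{close} if $d_H(f(u),f(v))\le r$ and \emph{far} otherwise. With this choice the ball bound gives $|B_r(i)|\le(1+\lilOh{1})m^{1/2}$, while every far edge contributes at least $r^2=\bigOmega{(\log_{d-1}m)^2}$ to the denominator. Hence it suffices to prove that, a.a.s.\ over $G$, \emph{no} function $f\in\mathcal{F}(\delta_m)$ makes more than half of the $\frac{dn}{2}$ edges of $G$ close: in that case the denominator is $\bigOmega{dn(\log_{d-1}m)^2}$, and combining with the numerator bound yields $\gamma(G,d_H,f)=\bigOh{1}$ for every admissible $f$ simultaneously.

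Bounding the close edges uniformly over $f$ is the heart of the matter. For a single fixed $f$ a first-moment estimate in the configuration model is immediate: writing $a_i=|f^{-1}(i)|$, the expected number of close edges is at most $\frac{d}{2n}\sum_i a_i\sum_{j\in B_r(i)}a_j\le\frac{d}{2n}\cdot\frac{n}{\delta_m}\cdot(1+\lilOh{1})m^{1/2}\cdot n$, using $a_i\le n/\delta_m$ and the ball bound, and this is $\lilOh{dn}$ as soon as $\delta_m\gg m^{1/2}$. The obstruction is that $\mathcal{F}(\delta_m)$ contains of order $m^n$ functions, so a per-$f$ Markov (or even exponential) bound cannot survive a union bound once $d=\lilOh{\log m}$. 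My plan is therefore to discard the union bound over functions and instead bound the expected number, over the random $G$, of \emph{labeled edge-configurations}: a set $S$ of $\ceil{dn/4}$ edges of $G$ together with an assignment of $H$-labels to the endpoints of $S$ in which adjacent endpoints receive labels at distance $\le r$ and no label is used more than $n/\delta_m$ times. Each prospective edge is realized with probability $\bigOh{d/n}$; the number of ways to select and pair the endpoints is estimated by Stirling; and the admissible labelings are counted using the two constraints, closeness and the fiber cap. The essential point is that the fiber cap inherited from $\mathcal{F}(\delta_m)$ supplies exactly the entropy deficit needed to defeat the combinatorial count, and balancing that deficit against the ball-growth factor $(d-1)^r\approx m^{1/2}$ and the edge probability is what forces $\delta_m$ above $\frac{2\sqrt{2e}}{d}m^{1/2+4/(d+1)+\eps}$; the prefactor originates from the Stirling estimate of the edge-pairing count and the exponent correction $\frac{4}{d+1}$ from the exact ball-size bound together with the slack required to keep a positive fraction of edges far.

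I expect this last step to be the main obstacle. The per-function lower tail of the denominator is only controlled at scale $e^{-\bigOh{dn}}$, since revealing a single edge changes the denominator by at most $\diam{H}^2=\bigOh{(\log_{d-1}m)^2}$, which is hopeless against the $m^n$ functions when $d$ grows slower than $\log m$, precisely the regime the theorem must cover. Replacing the union bound by the first-moment count over labeled configurations sidesteps this, but making the count sharp is delicate: one must choose $r$ so that far edges still carry $\bigOmega{(\log_{d-1}m)^2}$ while the admissible-labeling entropy stays strictly below the cost of realizing $\bigOmega{dn}$ edges. The hypothesis $d=\lilOh{m^{1/2}}$ is what keeps this balance feasible, guaranteeing that $\diam{H}$, and hence $r$, grows and that $(d-1)^r\approx m^{1/2}$ stays well below $m$, so that far pairs genuinely dominate the distance distribution of $H$.
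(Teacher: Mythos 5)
Your reduction is the same as the paper's: bound the all-pairs sum trivially by $n^2\diam{H}^2$, and show that, uniformly over $f\in\mathcal{F}(\delta_m)$, a constant fraction of the edges of $G$ joins pairs at $H$-distance at least a fixed fraction of $\diam{H}$ (using Theorem \ref{randomdiam}, hence $d=\lilOh{m^{1/2}}$, to control $\diam{H}$). The gap is at the uniformity step, which is the heart of the theorem. Your reason for abandoning the union bound over the $m^n$ functions --- that the per-function tail ``is only controlled at scale $e^{-\bigOh{dn}}$'' because one edge switch moves the sum by $\diam{H}^2$ --- is a miscalculation: that scale is what Azuma/bounded-difference concentration gives, but the relevant event is an upper tail sitting far above a polynomially small mean, where the multiplicative Chernoff bound is much stronger. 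The paper's proof does exactly the union bound you reject: reveal the configuration-model matching sequentially, couple the indicators ``the $i$-th revealed edge lands in $N_H$'' with i.i.d.\ Bernoulli($p$) variables where $p \lesssim \frac{m\,d^{\alpha D}}{\delta_m^2\,d(d-1)(1-2\eta)^2}$, and note that the hypothesis on $\delta_m$ forces $p \ll \frac{1}{2e}m^{-8/(d+1)}$. With the choices $\delta=\frac14\left(1-\frac1d\right)$, $\eta=\beta=\frac12-\delta$, so that $c=d\left(\beta-\frac12+\eta\right)=\frac12$ and $c\eta d=\frac{d+1}{8}$, the per-function failure probability is at most $\left(ep/c\right)^{c\eta dn}\ll m^{-n}$. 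The exponent is $\bigTheta{n\log(1/p)}=\bigOmega{n\log m}$, not $\bigTheta{nd}$: the factor $\log(1/p)$, which your Azuma-style estimate cannot see, is precisely what the polynomial lower bound on $\delta_m$ buys, and it beats the $m^n$ entropy even when $d\ll\log m$.

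Because of this misdiagnosis, your replacement --- a first-moment count over labeled edge-configurations --- is proposed exactly where you admit the argument is ``delicate,'' and it is never executed; the claims that the Stirling count produces the prefactor $\frac{2\sqrt{2e}}{d}$ and that the ball bound produces the exponent $\frac{4}{d+1}$ are asserted, not derived. (In the paper these constants come from the parameter choices above and from the union-bound requirement $p\ll\frac{c}{e}m^{-1/(c\eta d)}$, not from a configuration count.) Two further cautions if you do try to complete your route: first, your closeness radius $r=\ceil{\frac12\log_{d-1}m}$ makes the ball size $\approx m^{1/2}$, whereas the paper takes the threshold $\alpha D$ with $\alpha$ \emph{small}, so that the ball size is $m^{\bigOh{\eps}}$; keeping the ball small is part of what makes $p$ small enough, and with your radius the numbers only close (if at all) for $d$ large relative to $1/\eps$ and only with your sharper single-cap count of $|N_H|$. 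Second, your configuration count is essentially the same first moment reorganized (counting labelings of covered endpoints instead of whole functions), so it cannot be justified as circumventing an obstruction --- there is no obstruction to circumvent. As written, the proposal sets up the problem correctly but does not prove the stated bound for the stated $\delta_m$.
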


\begin{remark} From the statement of the Theorem \ref{T:typicaldinf}
  we can see that, essentially, we must have that $\delta_{m} =
  \omega(m^{1/2+\epsilon}/d)$, so by necessity as $d = o(m^{1/2})$, we have that $\delta_m$
  grows to $\infty$. As should be clear from the proof of this
  theorem, any argument that will extend Theorem \ref{randomdiam} to a case when
  $d \gg m^{1/2+\epsilon'}$ for some $\epsilon'$ will result in the
  possibility of taking $\delta_m = 1$ for a range of growth of $d$. 
\end{remark}



On the other hand, we may also analyze $\gamma(G, d_H, f)$ for a fixed function (or sequence of functions) $f$. In this case, we have the following result.

\begin{theorem}\label{T:atypical}
Let $1>\eps>0$, and let $n, m$ satisfy $m^2\log m\ll n^{\eps/2}$. Let $(G, H)\in \mathcal{G}_{n, d}\times \mathcal{G}_{m, d}$, with $d$ constant, and let $f_n:V(G)\to V(H)$ be any sequence of functions. Then there exists a constant $C$ such that
\[\p\left(\gamma(G, d_H, f_n)\leq C\right)\to 1.\]
\end{theorem}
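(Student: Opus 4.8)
The plan is to show that in fact $\gamma(G,d_H,f_n)\to 1$ almost surely, by proving that the (random) denominator in the defining relation for $\gamma$ concentrates around the (deterministic) numerator. Write $a_i=\size{f_n^{-1}(i)}$ for $i\in V(H)$, let $w_{ij}=d_H(i,j)^2$, and let $E_{ij}$ be the number of ordered edges of $G$ with one endpoint in $f_n^{-1}(i)$ and the other in $f_n^{-1}(j)$. Then the numerator of $\gamma(G,d_H,f_n)$ is the deterministic quantity $N=\frac{d}{n}\sum_{i,j}a_ia_jw_{ij}$, while the denominator is $D=\sum_{i,j}E_{ij}w_{ij}$. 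Working in the configuration model, the probability that two prescribed half-edges are matched is $(dn-1)^{-1}$, so $\E[E_{ij}]=\frac{d^2a_ia_j}{dn-1}\approx\mu_{ij}:=\frac{da_ia_j}{n}$; crucially the within-class terms carry weight $w_{ii}=d_H(i,i)^2=0$ and so drop out, giving $\E[D]=\sum_{i,j}\mu_{ij}w_{ij}=N$ up to a $1+O(1/n)$ factor (and up to the constant-probability cost of conditioning on the configuration model producing a simple graph, under which whp events transfer to the uniform model). Thus $\gamma=N/D\to 1$ in expectation, and it remains to establish concentration of $D$.

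For the concentration step I would first record that for a random $d$-regular $H$ the diameter, hence $W:=\max_{i,j}w_{ij}=\mathrm{diam}(H)^2$, is $O(\log^2 m)$ a.a.s., and condition on $H$ so that the $w_{ij}$ become fixed. A second-moment estimate for the configuration model gives $\var{D}\lesssim\sum_{i,j}\mu_{ij}w_{ij}^2\le W\sum_{i,j}\mu_{ij}w_{ij}=WN$ (the off-diagonal covariances being of the same or lower order), whence Chebyshev's inequality yields $\p(\abs{D-N}\ge N/2)\lesssim W/N$. Equivalently one may expose the configuration-model matching one pair at a time, forming a Doob martingale whose bounded differences are $O(W)$, and apply Azuma over the $dn/2$ steps. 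Either route shows $D=(1+o(1))N$, and hence $\gamma(G,d_H,f_n)\to 1$, whenever $N\gg W$; the hypothesis $m^2\log m\ll n^{\eps/2}$ is precisely what forces the relevant class-to-class means $\mu_{ij}$ to diverge and makes this the typical regime.

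The main obstacle is the complementary regime $N=O(W)$, where $f_n$ is so concentrated that the variance bound is too weak. Here I would argue structurally rather than probabilistically. Since $w_{ij}\ge 1$ for $i\neq j$, the bound $N=O(W)$ forces $\sum_{i\neq j}a_ia_j=O(nW/d)$, i.e. $\sum_i a_i^2=n^2-o(n^2)$, so a single class — say $f_n^{-1}(1)$ — contains all but a set $B$ of $o(n)$ vertices. The number of edges of $G$ internal to $B$ has mean $O(d\abs{B}^2/n)=o(d\abs{B})$ and concentrates, so almost all of the $d\abs{B}$ half-edges emanating from $B$ are matched into $f_n^{-1}(1)$; consequently $D\ge(1-o(1))\,d\sum_{v\in B}w_{1,f_n(v)}$, which matches $N=(1+o(1))\,d\sum_{v\in B}w_{1,f_n(v)}$ to within a constant factor. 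Reconciling the concentration argument (good for spread-out $f_n$) with this deterministic edge structure (good for concentrated $f_n$) is the crux: organizing it as a clean case split at a threshold of order $\log m$ on the class sizes, and checking that the union bound over the at most $m^2$ class pairs survives under $m^2\log m\ll n^{\eps/2}$, is where the real work lies.
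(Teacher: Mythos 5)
Your route is genuinely different from the paper's. The paper deduces Theorem \ref{T:atypical} from Theorem \ref{T:atypicalfixed}: for a fixed target $H$ it decomposes $\sum_{\{u,v\}\in E(G)}d_H^2(f(u),f(v))$ over class pairs $(S_i,S_j)$, applies the switching concentration inequality (Theorem \ref{concentration}) to pairs with $s_is_j\geq cn^{2-\eps}$, and controls all remaining pairs by the spectral discrepancy bound of Lemma \ref{discrepancytwo}, which produces an \emph{additive} error of order $m^2\diam{H}^2n^{-\eps/2}$; Theorem \ref{T:atypical} then follows by layering on the whp facts $\diam{H}=\Theta(\log m)$ and the eigenvalue bound, and the hypothesis $m^2\log m\ll n^{\eps/2}$ is spent exactly on killing that additive error. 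You instead work with the single global statistic $D$, prove concentration by a second-moment/Chebyshev computation in the configuration model, and treat concentrated functions by a separate structural argument. Your variance bound $\var{D}\lesssim WN$ is correct (the positive covariances contribute $O(N^2/(dn))$, and $N\leq dnW$), and your observation that $N=O(W)$ forces all but $O(W/d)$ vertices into a single class is also correct. Notably, your Case-B argument confronts the highly concentrated functions head on, which is precisely the regime where an additive error bound of the paper's type is least informative, and your scheme would need far less than $m^2\log m\ll n^{\eps/2}$.

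However, as submitted there is a genuine gap, and you name it yourself: the regimes $N\gg W$ and $N=O(W)$ do not partition the possible sequences, and the reconciliation is deferred as ``where the real work lies.'' The fix is easier than you suggest, and needs no union bound over the $m^2$ class pairs: split at a concrete threshold, say $N\geq W\log n$ versus $N<W\log n$, after conditioning on $H$ having diameter $O(\log m)$. In the first case Chebyshev gives $\prob{|D-N|\geq N/2}\lesssim W/N+1/(dn)\leq 1/\log n+1/(dn)\to 0$, so $\gamma\leq 2$ whp. In the second case, $n(n-\max_i a_i)\leq\sum_{i\neq j}a_ia_j\leq nW\log n/d$ forces $|B|\leq W\log n/d=O(\log^2m\log n)$, so the expected number of edges inside $B$ is $O(d|B|^2/n)=\lilOh{1}$; whp there are \emph{none}, every half-edge of $B$ lands in the big class, $D=2d\sum_{v\in B}w_{1,f(v)}$ exactly, and $N\leq D+\lilOh{1}$, giving $\gamma\leq 1+\lilOh{1}$ (the case $B=\emptyset$ is degenerate). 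Related to this, your stated justification in Case B is too weak: knowing only that the number of internal edges is $\lilOh{d|B|}$ does not yield $D\geq(1-\lilOh{1})d\sum_{v\in B}w_{1,f_n(v)}$, since each lost half-edge can cost weight up to $W\to\infty$; you need $|B|=\lilOh{n/W}$ (which you do have in that regime) so that whp there are no internal edges at all.

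Two further misstatements should be corrected. First, the ``either route'' claim is false: a Doob martingale with differences $O(W)$ over $dn/2$ exposure steps gives deviations of order $\sqrt{dn}\,W$, so the Azuma route requires $N\gg\sqrt{dn}\,W$, not $N\gg W$; only the Chebyshev route works at your threshold. Second, the hypothesis $m^2\log m\ll n^{\eps/2}$ does not ``force the relevant class-to-class means to diverge'' or make the typical regime automatic: it constrains $m$ against $n$ only and says nothing about $f_n$, which remains arbitrary and can be as concentrated as it likes. In your argument that hypothesis is nearly irrelevant; in the paper's argument it is load-bearing. Completed as above, your approach would in fact prove a statement stronger than the theorem as stated.
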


To prove this theorem, we shall first establish the following result for a fixed graph $H$.

\begin{theorem}\label{T:atypicalfixed}
Let $1>\eps>0$, and let $G\in\mathcal{G}_{n, d}$. Let $H$ be any fixed, connected graph, and let $f_n :V(G)\to V(H)$ be any sequence of functions. There exists a constant $C$ such that 
\[\p\left(\gamma(G, d_H, f_n)\leq C\right)\to 1.\]
\end{theorem}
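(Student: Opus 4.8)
The plan is to reduce the claim to the edge-isoperimetric (expansion) property of $G$ and to exploit that $H$ is a fixed finite graph, so that all graph distances in $H$ lie in a bounded range. Throughout, write $V_i = f_n^{-1}(i)$ and $n_i = \abs{V_i}$ for $i\in V(H)$, let $D=\diam{H}$, and assume $f_n$ is non-constant (otherwise both sums below vanish and there is nothing to prove). Unwinding the definition, $\gamma(G, d_H, f_n) = \frac{d}{n}\cdot\frac{W}{S}$, where $W=\sum_{u,v\in V(G)} d_H^2(f_n(u),f_n(v)) = \sum_{i,j} n_i n_j\, d_H^2(i,j)$ and $S=\sum_{u\sim_G v} d_H^2(f_n(u),f_n(v))$. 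Since $H$ is connected, $1\le d_H(i,j)\le D$ for every pair $i\ne j$, so writing $X$ for the number of edges of $G$ whose endpoints lie in distinct parts $V_i$, I obtain the sandwich $X\le S\le D^2 X$ and $\paren{n^2-\sum_i n_i^2}\le W\le D^2\paren{n^2-\sum_i n_i^2}$. Consequently $\gamma(G,d_H,f_n)\le \frac{d}{n}\cdot\frac{D^2(n^2-\sum_i n_i^2)}{X}$, and the whole problem becomes a lower bound on the size $X$ of the cut induced by the partition $\set{V_i}$.

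The key input is the standard fact that, for fixed $d\ge 3$, a random graph $G\in\mathcal{G}_{n,d}$ is an edge expander a.a.s.: there is a constant $h=h(d)>0$ such that, with probability tending to $1$, every $S\subseteq V(G)$ satisfies $e(S, V(G)\setminus S)\ge h\min\paren{\abs{S}, n-\abs{S}}$ (this follows, for instance, from the a.a.s.\ spectral gap together with the discrete Cheeger inequality; see \cite{hoory2006expander}). Working on the event that this holds, I would apply the inequality to each part $V_i$ and sum. Since each crossing edge is counted exactly twice, $X=\frac12\sum_i e(V_i, V(G)\setminus V_i)\ge \frac{h}{2}\sum_i \min(n_i, n-n_i)$. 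The elementary bound $\min(a, n-a)\ge \frac{a(n-a)}{n}$ then gives $\sum_i\min(n_i,n-n_i)\ge \frac1n\sum_i n_i(n-n_i)=\frac1n\paren{n^2-\sum_i n_i^2}$, so that $X\ge \frac{h}{2n}\paren{n^2-\sum_i n_i^2}$.

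Combining the two estimates, on the expansion event we get $\gamma(G,d_H,f_n)\le \frac{d}{n}\cdot\frac{D^2(n^2-\sum_i n_i^2)}{\frac{h}{2n}(n^2-\sum_i n_i^2)}=\frac{2dD^2}{h}$, a constant depending only on $d$ and $H$. As the expansion event has probability tending to $1$, setting $C=2dD^2/h$ proves the theorem. I expect the main conceptual obstacle to be the regime of very unbalanced cuts: if one instead tried to prove that $X$ concentrates around its mean $\frac{d}{2n}(n^2-\sum_i n_i^2)$ directly, via a bounded-differences argument on the configuration model or via the expander mixing lemma, the argument would break down exactly when the partition is highly unbalanced, since then $\expect{X}$ need not grow with $n$ and no concentration is available. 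Routing the estimate through the \emph{linear} edge-isoperimetric inequality sidesteps concentration entirely, because that inequality lower-bounds every cut by a quantity proportional to its smaller side, uniformly over all cut shapes. The only points requiring care are the restriction to $d\ge 3$ (for $d=2$ the graph is a union of cycles and the statement fails) and the degenerate constant-$f_n$ case.
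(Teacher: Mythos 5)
Your proof is correct (in the regime $d\geq 3$ that the paper implicitly works in throughout), but it takes a genuinely different route from the paper's own argument. The paper decomposes the edge sum by fibers, $\sum_{\{u,v\}\in E(G)} d_H^2(f(u),f(v)) = \sum_{i,j} e_G(S_i,S_j)\, d_H^2(i,j)$, and then (i) uses the switching-based concentration inequality on the configuration model (Theorem \ref{concentration}, via Lemma \ref{errorbound}) to replace $e_G(S_i,S_j)$ by $\frac{ds_is_j}{n}(1-\lilOh{1})$ for pairs with $s_is_j\geq cn^{2-\eps}$, and (ii) controls the remaining pairs with the discrepancy bound of Lemma \ref{discrepancytwo}, arriving at $\frac{1}{n}\sum_{\{u,v\}\in E(G)} d_H^2 \geq \frac{d}{n^2}(1-\lilOh{1})\sum_{u,v} d_H^2 - \bigOh{n^{-\eps/2}}$. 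You instead sandwich all nonzero distances in $H$ between $1$ and $D=\diam{H}$ and reduce everything to a lower bound on the cut $X$, which you get by applying the a.a.s.\ linear edge-isoperimetric inequality (spectral gap plus discrete Cheeger) to each fiber $V_i$ separately. Each approach buys something. The paper's concentration machinery shows the two sides of the expansion inequality are asymptotically comparable with constant $1+\lilOh{1}$ whenever the main term dominates the error, and the same lemmas carry over to the regime where $m\to\infty$ (Theorem \ref{T:atypical}); your constant $2dD^2/h$ is proportional to $\diam{H}^2$, so it would blow up like $\log^2 m$ there and cannot give Theorem \ref{T:atypical}. Conversely, your argument needs no concentration at all, and the bound it produces is uniform over all functions $f$ simultaneously, so it in fact bounds $\gamma(G,d_H)$ and recovers Theorem \ref{T:typicalfixed} for any fixed connected $H$ without invoking Bourgain's embedding. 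Moreover, as your closing remark correctly anticipates, your route closes a case where the paper's chain of inequalities is actually vacuous: if $f_n$ sends all but $\lilOh{n^{1-\eps/2}}$ vertices to a single vertex of $H$, then the main term $\frac{d}{n^2}\sum_{u,v}d_H^2(f(u),f(v))$ is $\lilOh{n^{-\eps/2}}$ and is swallowed by the additive error term $\frac{\overline{\lambda}d\sqrt{c}}{n^{\eps/2}}m^2\diam{H}$ in the paper's final display, whereas your per-fiber isoperimetric bound has no additive error and handles such highly unbalanced functions uniformly. Your two caveats (constant $f_n$, and the failure at $d=2$) are handled appropriately and are consistent with the paper's implicit assumptions.
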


In order to help the reader navigate the five main results we have
proved here, we provide the following table cataloguing the growth
regimes and indicating which results are applicable.

\begin{center}
\begin{table}[h]
\begin{tabular}{|c|c|c|c|c|} 
\hline
Fixed & Growing & Conditions & Type of function & Result \\
parameters & parameters & & & \\
\hline
$d, m$  & $n$ &$H$ fixed & all & Theorem \ref{T:typicalfixed} \\
\hline
$d, m$& $n$& $H$ fixed& any (fixed) & Theorem \ref{T:atypicalfixed} \\
\hline
$d$ & $m, n$ & $m \leq n$ & typical & Theorem \ref{T:typical} \\
\hline
$d$&$m,n$ & $m \leq n$, & any (fixed) & Theorem \ref{T:atypical} \\
& & $m^2 \log m \ll n^{\epsilon/2}$ &  &  \\
\hline
& $d,m,n$ & $m\leq n$ &typical  & Theorem \ref{T:typicaldinf}\\
& & $d = o(m^{1/2})$ &  &  \\
\hline
\end{tabular}
\vspace{.4cm}
\caption{Summary of main results and corresponding parameter regimes.}
\end{table}
\end{center}

The remainder of this paper will be structured as follows. In Section \ref{notations}, we introduce the tools and notations we shall use to prove these theorems. In Section \ref{typicalfunction}, we adapt the technique used in \cite{mendel2014expanders} for a special case of typical functions to prove Theorems \ref{T:typicalfixed}, \ref{T:typical}, and \ref{T:typicaldinf}. In Section \ref{atypicalfunctions}, we prove Theorems \ref{T:atypicalfixed} and \ref{T:atypical}.

\section{Tools and Notation}\label{notations}

Throughout, we write $\mathcal{P}_{n, d}$ to be the probability space of $d$-regular graphs on $n$ vertices, chosen according to the configuration model (see, for example, \cite{wormald1999models}). As we shall require some details of the configuration model in our proof of Theorems \ref{T:typical}, and \ref{T:typicaldinf}, we provide them here.

Let $V=\{v_1, v_2, \dots, v_n\}$ be vertices. For each $i$, let $u_{i, 1}, u_{i, 2}, \dots, u_{i, d}$ be $d$ distinct copies of $v_i$. We choose a random $d$-regular graph on $V$ as follows. First, uniformly choose a random matching $M$ on the set $U = \{u_{i, j}\ | 1\leq i\leq n, 1\leq j\leq d\}$. We note that this can be done only if $nd$ is even; indeed, if $nd$ is odd, there is no $d$-regular graph on $n$ vertices. We then form $G$ as follows. Let $V(G)=V$, and for every edge $\{u_{i_1, j_1}, u_{i_2, j_2}\}\in M$, we add the edge $\{v_{i_1}, v_{i_2}\}$ to $E(G)$.

Although this process can certainly result in a graph $G$ containing
loops or multiple edges, it is well known (see, for example,
\cite{wormald1999models}) that the probability of such a graph being
produced by this technique is asymptotically constant (converging to
$e^{-(d^2-1)/4}$), and that when we restrict to the set of simple
graphs produced under the configuration model, the distribution is
uniform over the set of simple $d$-regular graphs. We shall, therefore, restrict ourselves to $\mathcal{G}_{n, d}$, the probability space of simple $d$-regular graphs on $n$ vertices. 

Among the properties of random regular graphs we shall require is a
bound on the eigenvalues of such a graph, which we give below. 


\begin{theorem}\label{randomeigen}
Let $G\sim \mathcal{G}_{n, d}$ be a sequence of random regular
graphs such that $d$ is either fixed or $d = o(n^{1/2})$ as $n
\rightarrow \infty$. Then, with probability $1-O(n^{-2})$, there
exists a constant $C$ such that first nontrivial eigenvalue of the
normalized Laplacian for $G$ is at least $1-\frac{C}{\sqrt{d}}$.
\end{theorem}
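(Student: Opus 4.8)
The plan is to reduce the statement to a bound on the second-largest adjacency eigenvalue and then to establish that bound by the trace (moment) method in the configuration model. For a $d$-regular graph $G$ with adjacency matrix $A$, the normalized Laplacian is $\mathcal{L} = I - \frac1d A$, so if $d = \mu_1 \ge \mu_2 \ge \dots \ge \mu_n \ge -d$ are the eigenvalues of $A$, those of $\mathcal{L}$ are $1 - \mu_i/d$, and the first nontrivial (smallest positive) one is $\lambda_1(\mathcal{L}) = 1 - \mu_2/d$. Hence $\lambda_1(\mathcal{L}) \ge 1 - \frac{C}{\sqrt d}$ is equivalent to $\mu_2 \le C\sqrt d$. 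I would in fact prove the slightly stronger claim that $\tilde\lambda := \max(\mu_2, |\mu_n|) \le C\sqrt d$ (the second largest eigenvalue in modulus), which the method below controls automatically and which is typically what one wants downstream.

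By the moment method, for any integer $k\ge 1$ we have
\[ \tilde\lambda^{2k} \le \sum_{i\ge 2}\mu_i^{2k} = \tr{A^{2k}} - d^{2k}, \]
so Markov's inequality gives $\prob{\tilde\lambda \ge t} \le t^{-2k}\paren{\e{\tr{A^{2k}}} - d^{2k}}$, and everything reduces to estimating $\e{\tr{A^{2k}}} - d^{2k}$. I would carry out this computation in the configuration model $\mathcal{P}_{n, d}$, where $\tr{A^{2k}}$ counts closed walks of length $2k$ in $G$ and its expectation factors as a sum, over combinatorial \emph{shapes} of closed walks, of (the number of walks of that shape) times (the probability that the required edges are present in the uniform random matching on the $nd$ half-edges). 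The latter probability, for a walk whose trace uses $j$ distinct edges, is comparable to $(nd)^{-j}$ up to falling-factorial corrections.

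I would organize the closed walks by the isomorphism type of the subgraph they trace. The dominant contribution comes from the \emph{tree-like} walks --- those traversing each edge an even number of times and tracing out a tree --- which per root vertex number at most the Catalan-type count $\binom{2k}{k}(d-1)^k \le (4(d-1))^k$, matching the $2k$-th moment of the Kesten--McKay law; summing over $n$ roots gives $n(4(d-1))^k$. Walks whose trace contains $c\ge 1$ independent cycles are suppressed by roughly a factor $(k^2/n)^c$ relative to the tree term, so provided $k^2 = \lilOh{n}$ they are lower order; controlling the $d$-dependence of the combinatorial prefactors is precisely where the hypothesis $d = \lilOh{n^{1/2}}$ enters. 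The upshot is the estimate $\e{\tr{A^{2k}}} - d^{2k} \le n(4(d-1))^k(1 + \lilOh{1})$.

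Combining, $\prob{\tilde\lambda \ge C\sqrt d} \le n\paren{\frac{4(d-1)}{C^2 d}}^k \le n\paren{4/C^2}^k$, which for any fixed $C > 2$ decays geometrically in $k$. Taking $k = \ceil{\alpha \log n}$ with $\alpha = \alpha(C)$ large enough makes the right-hand side $\bigOh{n^{-2}}$, while keeping $k^2 = \bigOh{\log^2 n} = \lilOh{n}$ so that the error terms above remain subdominant uniformly over the whole range ($d$ fixed or $d = \lilOh{n^{1/2}}$). Finally, since the trace computation lives in $\mathcal{P}_{n, d}$, I would transfer the bound to simple graphs in $\mathcal{G}_{n, d}$. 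I expect the two genuinely delicate points to be (i) the uniform bookkeeping of the cyclic (error) walk-shapes as $d$ grows, which is the heart of the Friedman / Broder--Frieze--Suen--Upfal type estimate, and (ii) the passage from the configuration model to $\mathcal{G}_{n, d}$, which is immediate for fixed $d$ (constant probability of simplicity) but requires more care when $d\to\infty$, since there one cannot afford to condition on simplicity and absorb the corresponding probability factor; alternatively, one may invoke a known spectral bound for simple random regular graphs directly.
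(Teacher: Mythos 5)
Your reduction to $\mu_2 \le C\sqrt{d}$ (indeed to $\max(\mu_2,|\mu_n|)\le C\sqrt d$) is correct and is what the paper needs, but note first that the paper does not prove this theorem at all: its ``proof'' is the remark following the statement, which cites Friedman's theorem for fixed $d$ and Lemma 18 of Broder--Frieze--Suen--Upfal for $d=o(n^{1/2})$ (the latter proved by the Kahn--Szemer\'edi net-and-Rayleigh-quotient argument, not by traces). So you are attempting a self-contained proof of a statement the authors deliberately outsourced, and your sketch breaks at exactly the point that makes those cited results hard theorems.

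The gap is your central combinatorial claim: that walk shapes containing $c\ge 1$ independent cycles are suppressed by a factor $(k^2/n)^c$ relative to the tree term, so that $\mathbb{E}\,\mathrm{Tr}(A^{2k}) - d^{2k} \le n\,(4(d-1))^k(1+o(1))$. This is false in the regime $k=\Theta(\log n)$ that you need. Concretely, closed walks that wind once around a cycle of length $2k$ contribute about $(d-1)^{2k}$ to $\mathbb{E}\,\mathrm{Tr}(A^{2k})$ (the expected number of $2k$-cycles in the configuration model is $\sim (d-1)^{2k}/(4k)$, and each supports $4k$ such walks); this is comparable to the subtracted $d^{2k}$, and it exceeds your tree term $n(4(d-1))^k$ by a factor $((d-1)/4)^k/n$, which diverges as soon as $d\ge 6$ and $k\ge \alpha\log n$ with $\alpha\ln\frac{d-1}{4}>1$. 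The cyclic shapes are not error terms to be bounded in absolute value: their sum is precisely what builds up the $d^{2k}(1+o(1))$ that must \emph{cancel} against the subtracted $d^{2k}$, and exhibiting that cancellation is the hard content of the trace method here. Done naively (Broder--Shamir), the residual errors after cancellation yield only $\mu_2=O(d^{3/4})$, i.e.\ $C$ growing like $d^{1/4}$; but the theorem is vacuous unless $C<\sqrt d$ (since $\lambda_1\ge 0$ always), so its entire content is a $C$ uniform in $d$ as $d\to\infty$, which the naive trace method does not deliver --- that strength requires Friedman's machinery or the Kahn--Szemer\'edi approach. Your second flagged issue is also fatal rather than cosmetic: for growing $d$ the probability of simplicity is $e^{-\Theta(d^2)}$, superpolynomially small once $d\gg\sqrt{\log n}$, so an $O(n^{-2})$ failure bound in $\mathcal{P}_{n,d}$ transfers to nothing in $\mathcal{G}_{n,d}$, and your fallback of ``invoking a known spectral bound for simple graphs directly'' is not a repair of the argument but a retreat to the citation that is the paper's actual proof.
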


\begin{remark} For $d$ fixed, the above is a particular case of Friedman's
standard result \cite{friedman2003proof,  friedman1989second}, while
for $d$ growing slowly so that $d = o(n^{1/2})$ is a particular case
of the more general Lemma 18 in \cite{broderfriezesuenupfal98}.
\end{remark}


The usefulness of Theorem \ref{randomeigen} is immediately apparent in the following bound on the diameter of a random regular graph.

\begin{theorem}\label{randomdiam}
Let $G\sim \mathcal{G}_{n, d}$ be a random regular graph, where $d$ is
either fixed or it may tend to $\infty$ with $n$ in such a way that $d = o(n^{1/2})$. Then with probability tending to 1 as $n$ tends to $\infty$, we have $\diam{G} = \bigTheta{\log_d n}$.
\end{theorem}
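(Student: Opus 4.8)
The plan is to prove matching bounds $\diam{G} = \Omega(\log_d n)$ and $\diam{G} = O(\log_d n)$, the first deterministically and the second using the spectral gap from Theorem \ref{randomeigen}. The lower bound needs no randomness: in any $d$-regular graph, running a breadth-first search from $v$ produces at most $d(d-1)^{i-1}<d^i$ new vertices at level $i$, so the ball of radius $r$ satisfies $|B_r(v)| < \sum_{i=0}^{r} d^i \le 2d^r$. Since $\diam{G}=r$ forces $B_r(v)=V(G)$ for every $v$, we get $n \le 2d^{\diam{G}}$, hence
\[
\diam{G} \ge \log_d(n/2) = (1-o(1))\log_d n,
\]
which is $\Omega(\log_d n)$ uniformly over all $d$-regular graphs, in both regimes of $d$.

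For the upper bound I would first translate Theorem \ref{randomeigen} into an adjacency-spectrum statement. Writing $A$ for the adjacency matrix with eigenvalues $d=\lambda_1>\lambda_2\ge\cdots\ge\lambda_n$, the normalized Laplacian of a $d$-regular graph is $I-\frac1d A$, so its first nontrivial eigenvalue being at least $1-C/\sqrt d$ is exactly $\lambda_2\le C\sqrt d$. Combined with the two-sided control of the bulk of the spectrum supplied by the results cited after Theorem \ref{randomeigen} (Friedman for fixed $d$, Broder–Frieze–Suen–Upfal for $d=o(n^{1/2})$), which also bound $|\lambda_n|$, this yields a.a.s. $\lambda := \max_{i\ge 2}|\lambda_i| = O(\sqrt d)$.

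The mechanism for the upper bound is then a standard walk-counting argument. Fixing an orthonormal eigenbasis $\{\phi_i\}$ with $\phi_1=\tfrac{1}{\sqrt n}\mathbf 1$, the number of length-$\ell$ walks between $u$ and $v$ decomposes as
\[
(A^\ell)_{uv} = \frac{d^\ell}{n} + \sum_{i\ge 2}\lambda_i^\ell \langle \phi_i,e_u\rangle\langle\phi_i,e_v\rangle,
\]
and by Cauchy–Schwarz, using $\sum_i \langle\phi_i,e_u\rangle^2 = 1$, the error term has absolute value at most $\lambda^\ell$. Thus $(d/\lambda)^\ell > n$ forces $(A^\ell)_{uv}>0$, i.e.\ $\dist{u,v}\le \ell$, and taking $\ell=\lceil \log n/\log(d/\lambda)\rceil$ gives $\diam{G}\le \log n/\log(d/\lambda)+O(1)$. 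Since $(d-2)^2>0$ yields $d>2\sqrt{d-1}\ge\lambda$, the quantity $\log(d/\lambda)$ is a positive constant for every fixed $d\ge 3$ and equals $\Theta(\log d)$ as $d\to\infty$; in both cases $\diam{G}=O(\log n/\log d)=O(\log_d n)$, matching the lower bound.

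The main obstacle is the upper bound, and specifically the need for a \emph{two-sided} spectral estimate. The walk-counting bound collapses if $\lambda_n$ is permitted to approach $-d$, so controlling the most negative eigenvalue (via non-bipartiteness and the cited a.a.s.\ bounds) is essential; one cannot sidestep this by passing to a lazy walk, since laziness caps the per-step contraction at a constant and would only deliver $\diam{G}=O(\log n)$ rather than the sharper $O(\log_d n)$ required when $d\to\infty$. A secondary technical point is the small fixed-$d$ case (e.g.\ $d=3$), where $d/\lambda$ exceeds $1$ only by a small constant; there one must check that $\log(d/\lambda)$ is nonetheless a positive constant, which is exactly what the bound $d>2\sqrt{d-1}$ guarantees.
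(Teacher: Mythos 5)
Your proposal follows essentially the same route as the paper: a deterministic ball-growth argument gives $\diam{G} = \bigOmega{\log_d n}$ for every $d$-regular graph, and the spectral gap of Theorem \ref{randomeigen} gives the matching upper bound. The one substantive difference is how the upper bound is closed. The paper invokes the eigenvalue--diameter inequality $\diam{G} \leq \ceil{\log(n-1)/\log(1/(1-\lambda))}$ (cited as Corollary 3.2 of \cite{chung1997spectral}) as a black box and substitutes $1-\lambda \leq C/\sqrt{d}$, whereas you re-derive it by expanding $(A^\ell)_{uv}$ in an orthonormal eigenbasis. Your derivation makes explicit a point that the paper's citation glosses over: the walk-counting mechanism requires \emph{two-sided} control $\max_{i\geq 2}|\lambda_i| = \bigOh{\sqrt{d}}$ of the adjacency spectrum, not merely a lower bound on the first nontrivial normalized-Laplacian eigenvalue; with $\lambda$ interpreted as $\lambda_1$ alone, the displayed inequality is not valid in general (consider nearly bipartite graphs, where the top normalized-Laplacian eigenvalue approaches $2$ while the diameter stays small relative to what the formula would predict). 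Since the sources behind Theorem \ref{randomeigen} (Friedman for fixed $d$, Broder--Frieze--Suen--Upfal for $d = \lilOh{n^{1/2}}$) do bound the most negative adjacency eigenvalue as well, your version is a self-contained and slightly more careful rendering of the same argument. A minor structural difference: the paper dispatches the fixed-$d$ case by citing Bollob\'as and de la Vega and runs the spectral argument only for $d\to\infty$, while your argument treats both regimes uniformly.
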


We note that the case that $d$ is fixed is a classical result due to
Bollob\'as and de la Vega \cite{bollobas1982diameter}, in which much
tighter estimates are given. We here extend to the case that $d$ tends
to $\infty$ under the condition that $d = o(n^{1/2})$ using Theorem \ref{randomeigen}, together with the following standard bound relating the diameter of a graph with its first nontrivial eigenvalue (see, for example, \cite{chung1991constructing} or Corollary 3.2 in \cite{chung1997spectral}):
\[ \diam{G} \leq \ceil{\frac{\log(n-1)}{\log(1/(1-\lambda))}}.\]

\begin{proof}[Proof of Theorem \ref{randomdiam}]
As noted above, we need only consider the case where $d\to\infty$ with $n$. Note that for any $d$-regular graph $G$, it is a standard exercise in graph theory to verify that $\diam{G}\geq \log_{d-1} n - \frac{2}{d} = \bigOmega{\log_d n}$. On the other hand, by the above bound combined with Theorem \ref{randomeigen}, we have that with probability at least $1-n^{-1}$, there is a constant $C$ with $\lambda \geq 1-\frac{C}{\sqrt{d}}$. Hence, with probability at least $1-n^{-1}$, we have
\begin{eqnarray*}
\diam{G} &\leq&  \ceil{\frac{\log(n-1)}{\log(1/(1-\lambda))}}\\
& \leq & \ceil{\frac{\log(n-1)}{\log(\sqrt{d}/C)}}\\
& = & \ceil{\frac{2\log(n-1)}{\log d + C'}} = \bigOh{\log_d(n)}.
\end{eqnarray*}
Hence, by combining these two inequalities, the result follows.
\end{proof}

We shall also require a standard discrepancy result for graphs. Let $G$ be any graph, and for $X\subset V(G)$, define $$\vol{X} = \sum_{x\in X} \deg_G(x)~.$$ If $X, Y\subset V(G)$ are disjoint, write $e_G(X,Y)$ to be the number of edges in $E(G)$ that have one endpoint in $X$ and the other in $Y$. The following result can be found as Theorem 5.1 in \cite{chung1997spectral}.

\begin{lemma}\label{discrepancy}
Let $G$ be a graph, and let $0=\lambda_0\leq \lambda_1\leq \dots\leq \lambda_{n-1}$ be the eigenvalues of the normalized Laplacian matrix $\mathcal{L}=I-D^{-1/2}AD^{-1/2}$. Let $\overline{\lambda} = \max\{|1-\lambda_1|, |\lambda_{n-1}-1|\}$. Then for all $X, Y\subset V(G)$, we have
\[\left|e_G(X, Y) - \frac{\vol{X}\vol{Y}}{\vol{G}}\right|\leq \overline{\lambda}\sqrt{\vol{X}\vol{Y}}.\]
\end{lemma}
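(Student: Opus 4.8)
The plan is to recognize this as the expander mixing (discrepancy) lemma phrased through the normalized Laplacian, and to prove it by passing to the symmetric normalized adjacency operator $N = D^{-1/2}AD^{-1/2}$, whose eigenvalues are exactly $1-\lambda_i$. First I would record the spectral picture: the top eigenvalue of $N$ is $1-\lambda_0=1$, with eigenvector $\phi_0 = D^{1/2}\mathbf{1}$ (since $A\mathbf{1}=D\mathbf{1}$ gives $N\phi_0=\phi_0$), and $\|\phi_0\|^2 = \vol{G}$. Because $1-\lambda_{n-1}\le 1-\lambda_i\le 1-\lambda_1$ for every $i$, the quantity $\overline{\lambda}=\max\{|1-\lambda_1|,|\lambda_{n-1}-1|\}$ is precisely the largest absolute value of an eigenvalue of $N$ on the orthogonal complement of $\phi_0$; equivalently, $\overline{\lambda}$ is the operator norm of $N$ restricted to $\phi_0^{\perp}$.

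Next I would translate the edge count into this language. For disjoint $X,Y$ we have $e_G(X,Y)=\mathbf{1}_X^{\top}A\mathbf{1}_Y$, and writing $A=D^{1/2}ND^{1/2}$ gives $e_G(X,Y)=a^{\top}Nb$ with $a=D^{1/2}\mathbf{1}_X$ and $b=D^{1/2}\mathbf{1}_Y$. The useful bookkeeping facts are $\|a\|^2=\mathbf{1}_X^{\top}D\mathbf{1}_X=\vol{X}$ (and likewise $\|b\|^2=\vol{Y}$), together with the inner products along the Perron direction $\psi_0=\phi_0/\sqrt{\vol{G}}$, namely $\langle a,\psi_0\rangle = \vol{X}/\sqrt{\vol{G}}$ and $\langle b,\psi_0\rangle=\vol{Y}/\sqrt{\vol{G}}$, which are immediate from $\mathbf{1}_X^{\top}D\mathbf{1}=\vol{X}$.

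The final step is the orthogonal splitting. Writing $a=\langle a,\psi_0\rangle\psi_0+a^{\perp}$ and $b=\langle b,\psi_0\rangle\psi_0+b^{\perp}$ and using that $N$ leaves both $\mathrm{span}(\psi_0)$ and $\phi_0^{\perp}$ invariant, the cross terms vanish and
\[
e_G(X,Y) = \langle a,\psi_0\rangle\langle b,\psi_0\rangle + (a^{\perp})^{\top}N\,b^{\perp} = \frac{\vol{X}\vol{Y}}{\vol{G}} + (a^{\perp})^{\top}N\,b^{\perp}.
\]
Bounding the error term by the operator norm of $N$ on $\phi_0^{\perp}$ then yields $|(a^{\perp})^{\top}Nb^{\perp}|\le \overline{\lambda}\,\|a^{\perp}\|\,\|b^{\perp}\|\le \overline{\lambda}\,\|a\|\,\|b\| = \overline{\lambda}\sqrt{\vol{X}\vol{Y}}$, which is exactly the claimed inequality.

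I do not expect a genuine obstacle here, since this is a classical computation; the only points demanding care are verifying that $\overline{\lambda}$ is the correct spectral radius on the complement of the Perron vector, and confirming that the conjugation by $D^{1/2}$ carries the all-ones eigenvector to the right direction so that the main term comes out as $\vol{X}\vol{Y}/\vol{G}$ rather than a mis-normalized quantity. (For non-disjoint $X,Y$ one would additionally track the diagonal contribution coming from $\mathbf{1}_{X\cap Y}^{\top}D\mathbf{1}$, but the statement as applied here is for disjoint sets, consistent with the definition of $e_G(X,Y)$.)
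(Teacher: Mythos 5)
Your proof is correct, and it is the classical expander mixing argument: the paper itself does not prove this lemma but simply cites it as Theorem 5.1 of Chung's \emph{Spectral Graph Theory}, where essentially the same spectral decomposition (splitting $D^{1/2}\mathbf{1}_X$ along the Perron direction $D^{1/2}\mathbf{1}$ and bounding the remainder by the spectral norm on the orthogonal complement) is carried out. Your handling of the two delicate points — identifying $\overline{\lambda}$ as the norm of $N$ restricted to $\phi_0^{\perp}$, and restricting to disjoint $X,Y$ consistent with the paper's definition of $e_G(X,Y)$ — is also sound.
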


Note that for a regular graph, we have $\vol{X} = d|X|$, so that $\frac{\vol{X}\vol{Y}}{\vol{G}} = \frac{d|X||Y|}{n}$, the expected number of edges between $X$ and $Y$. 

Throughout, we shall use the following notation. Let $f:[n]\to[m]$. For all $i\in [m]$, write $S_i=\{v\in[n]\ | \ f(v)=i\}=f^{-1}(i)$, and write $s_i=|S_i|$. If the function $f$ is not clear from context, we will use $S_i(f)$ and $s_i(f)$ to clarify. Note that in this way, we may think of a function $f:V(G)\to V(H)$ as a partition of the vertex set of $G$ into $m$ parts, where $m=|V(H)|$.

Therefore, we may rewrite the above discrepancy result in a more appropriate formulation for our special case.

\begin{lemma}\label{discrepancytwo}
Let $G\in \mathcal{G}_{n, d}$ and $C>0$. There exists a constant $\overline{\lambda}>0$ such that, with probability at least $1-Cn^{-2}$, 
\[\left|e_G(S_i, S_j) - \frac{ds_is_j}{n}\right|\leq d\overline{\lambda}\sqrt{s_is_j}\]
for all $i, j$ and all functions $f:[n]\to[m]$.
\end{lemma}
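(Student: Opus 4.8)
The plan is to derive Lemma \ref{discrepancytwo} as a direct, \emph{deterministic} consequence of Chung's discrepancy inequality (Lemma \ref{discrepancy}), once the single random event that the spectrum of $G$ is well-controlled has been secured. The crucial observation is that the quantity $\overline\lambda=\max\{|1-\lambda_1|,|\lambda_{n-1}-1|\}$ is a property of the graph $G$ alone, and that conditioned on a bound for it, Lemma \ref{discrepancy} holds \emph{simultaneously for every pair of subsets} $X,Y\subset V(G)$. In particular, no union bound over the (exponentially many) functions $f:[n]\to[m]$ is required: the uniformity over all $f$ and all $i,j$ is already built into the conclusion of Lemma \ref{discrepancy}, and the only probabilistic input is the eigenvalue bound.

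First I would record that eigenvalue bound. By Theorem \ref{randomeigen}, together with the underlying two-sided spectral estimates of Friedman and of Broder--Frieze--Suen--Upfal cited in the accompanying remark, the nontrivial normalized-Laplacian eigenvalues of $G$ concentrate near $1$; that is, both $|1-\lambda_1|$ and $|\lambda_{n-1}-1|$ are $O(1/\sqrt d)$, and hence bounded by an absolute constant, with probability $1-O(n^{-2})$. I note here that it is essential to invoke the full two-sided statement rather than only the lower bound on $\lambda_1$ literally displayed in Theorem \ref{randomeigen}, since $\overline\lambda$ also involves the top eigenvalue $\lambda_{n-1}$. The tail behaviour of these estimates gives the freedom to enlarge the constant $\overline\lambda$ so as to drive the failure probability below $Cn^{-2}$ for the prescribed $C$; this is precisely where the ``there exists a constant $\overline\lambda$'' in the statement is used.

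On the event that $\max\{|1-\lambda_1|,|\lambda_{n-1}-1|\}\leq\overline\lambda$, I would then simply specialize Lemma \ref{discrepancy} to the sets $X=S_i=f^{-1}(i)$ and $Y=S_j=f^{-1}(j)$ for $i\neq j$ (so that the two sets are disjoint, as required by the definition of $e_G$). Using regularity to substitute $\vol{S_i}=ds_i$, $\vol{S_j}=ds_j$, and $\vol{G}=nd$, the centering term becomes $\frac{\vol{S_i}\vol{S_j}}{\vol G}=\frac{ds_is_j}{n}$ and the error term becomes $\overline\lambda\sqrt{\vol{S_i}\vol{S_j}}=d\overline\lambda\sqrt{s_is_j}$, which is exactly the claimed inequality. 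Since $f$, $i$, and $j$ were arbitrary, the bound holds for all of them on this one high-probability event, completing the argument.

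The only genuinely delicate point is the one flagged above: guaranteeing control of the largest Laplacian eigenvalue $\lambda_{n-1}$, and not merely of the spectral gap at the bottom, together with tracking the failure-probability constant so that it can be matched to the given $C$. Everything else is a routine substitution, since the heavy lifting --- both the concentration of the spectrum and the discrepancy inequality itself --- is already supplied by the results in hand.
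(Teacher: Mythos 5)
Your proposal is correct and follows essentially the same route as the paper, which presents Lemma \ref{discrepancytwo} as an immediate specialization of Lemma \ref{discrepancy} to the sets $S_i = f^{-1}(i)$ (using $\vol{S_i}=ds_i$, $\vol{G}=nd$ for a $d$-regular graph), with the eigenvalue control supplied by Theorem \ref{randomeigen} at probability $1-O(n^{-2})$ and no union bound over functions needed. Your observation that one must invoke the two-sided spectral estimates from the cited literature (to control $\lambda_{n-1}$ as well as $\lambda_1$, since $\overline\lambda$ involves both) is a careful point that the paper glosses over, but it does not change the argument's structure.
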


In addition to these discrepancy results, we shall also require the following concentration inequality from \cite{wormald1999models}. Suppose $G$ is a graph obtained from the configuration model. We say $G'$ is obtained from $G$ by switching two edges if there exists $\{u, v\}, \{x, y\}\in E(G)$ such that $E(G')=E(G)\ba\{\{u, v\}, \{x, y\}\}\cup\{\{u, x\}, \{v, y\}\}$. This switching occurs in the matching obtained in configuration.

\begin{theorem}[\cite{wormald1999models}, Theorem 2.19] \label{concentration}Let $X$ be a random variable defined on $\mathcal{P}_{n, d}$, such that if $G, G'\in \mathcal{P}_{n, d}$ with $G'$ obtained from $G$ by switching two edges, then $|X(G)-X(G')|\leq c$. Then for all $\lambda>0$, $\p(|X-\e{X}|\geq\lambda)\leq 2\exp\left(\frac{-\lambda^2}{dnc^2}\right)$.
\end{theorem}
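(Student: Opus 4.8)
The plan is to recognize this as an instance of the bounded-differences (Azuma--Hoeffding) method, applied not to independent coordinates but to the sequential exposure of the uniformly random matching $M$ underlying the configuration model. First I would generate $M$ by the following process: order the configuration points $U=\{u_{i,j}\}$ arbitrarily, then repeatedly take the least unmatched point and reveal its partner $Z_k$, chosen uniformly among the points still unmatched. Each such step fixes one edge of $M$ and removes two points, so the process terminates after exactly $nd/2$ steps and determines $M$ completely. Letting $\mathcal{F}_k$ denote the $\sigma$-algebra generated by the first $k$ revealed edges and setting $X_k=\e{X\mid \mathcal{F}_k}$, we obtain a Doob martingale with $X_0=\e{X}$ and $X_{nd/2}=X$.

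The crux is to bound the increments $\abs{X_k-X_{k-1}}$ by $c$ using the switching hypothesis. Fix the partial matching revealed through step $k-1$, and let $p$ be the point whose partner is chosen at step $k$. For two candidate partners $q,q'$ of $p$, I would couple a uniformly random completion $M$ of the partial matching together with $\{p,q\}$ against a uniformly random completion $M'$ of the partial matching together with $\{p,q'\}$, as follows: identify the point $q'$ (free in the first process) with the point $q$ (free in the second), match the common pool of remaining free points by a single shared random matching $\sigma$, and let $a$ be the point assigned to $q'$ in $M$ (equivalently to $q$ in $M'$). Under this coupling $M$ contains the edges $\{p,q\},\{q',a\}$, while $M'$ contains $\{p,q'\},\{q,a\}$ and agrees with $M$ everywhere else, so $M'$ is obtained from $M$ by a single switching. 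The hypothesis then gives $\abs{X(M)-X(M')}\leq c$ pointwise under the coupling, hence $\abs{\e{X\mid \mathcal{F}_{k-1},\,Z_k=q}-\e{X\mid \mathcal{F}_{k-1},\,Z_k=q'}}\leq c$ for any two choices; since $X_{k-1}$ is the average of $X_k$ over the choice of $Z_k$, it follows that $\abs{X_k-X_{k-1}}\leq c$.

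Finally, I would invoke the Azuma--Hoeffding inequality for a martingale with increments bounded by $c$ over $nd/2$ steps:
\[
\p(\abs{X-\e{X}}\geq \lambda)\leq 2\exp\!\left(\frac{-\lambda^2}{2\cdot(nd/2)\,c^2}\right)=2\exp\!\left(\frac{-\lambda^2}{dnc^2}\right),
\]
which is exactly the claimed bound; note that the factor $\frac{1}{2}$ in Azuma's exponent is precisely cancelled by the step count $nd/2$, which is why the edges (rather than the $nd$ individual points) are the correct objects to expose. The main obstacle is the coupling step: one must confirm that a single re-sampling of $p$'s partner can always be realized as exactly one switching, including the degenerate cases where $q'$ or its image $a$ coincides with an already-constrained point (these only decrease the discrepancy), so that the given switching-Lipschitz constant $c$ transfers directly to the martingale increments with no loss.
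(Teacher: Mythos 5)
Your proof is correct. The paper itself gives no proof of this statement---it is imported verbatim as Theorem 2.19 of Wormald's survey---and your argument (exposing the pairing one edge at a time to form a Doob martingale, bounding each increment by $c$ via the coupling that realizes a re-choice of a single partner as exactly one switching, then applying Azuma--Hoeffding over the $nd/2$ steps so that the factor $2$ in the exponent cancels) is precisely the standard proof, i.e.\ essentially Wormald's own; all steps, including the convexity argument passing from pairwise conditional differences to martingale increments, are sound.
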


Finally, to prove Theorem \ref{T:typicalfixed}, we shall use Bourgain's Embedding Theorem \cite{bourgain1985lipschitz}. Although this theorem takes many forms, the specific version we shall use is as follows (see, for example, \cite{Magen2006}).

\begin{theorem}\label{T:Bourgain} 
There exist constants $c, C>0$ such that, for all finite metric spaces $X$, there exists a function $g:X\to \R^K$, where $K = \bigTheta{\log^2|X|}$ such that, for all $x, y\in X$,
\[ (c\log |X|) d_X(x, y)\leq \|g(x)-g(y)\|_2\leq (C\log^2|X| )d_X(x, y).\]
\end{theorem}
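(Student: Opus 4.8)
The plan is to realize $g$ as a Fréchet-type embedding whose coordinates record distances from $x$ to randomly chosen subsets of $X$ at a geometric range of densities. Write $n=\abs{X}$ and $L=\ceil{\log_2 n}$. For each scale $t\in\set{1,\dots,L}$ and each repetition $q\in\set{1,\dots,N}$ with $N=\bigTheta{\log n}$, I would sample a subset $A_{t,q}\subseteq X$ by including each point independently with probability $2^{-t}$, and set the corresponding coordinate of $g$ equal to $d_X(x,A_{t,q})=\min_{a\in A_{t,q}}d_X(x,a)$. The embedding then lands in $\R^K$ with $K=LN=\bigTheta{\log^2 n}$, as required. The upper bound is the easy half: each map $x\mapsto d_X(x,A)$ is $1$-Lipschitz, so every coordinate contributes at most $d_X(x,y)^2$ to $\|g(x)-g(y)\|_2^2$, and summing over the $K$ coordinates gives $\|g(x)-g(y)\|_2\le \sqrt{LN}\,d_X(x,y)=\bigOh{\log n}\,d_X(x,y)$ before the final rescaling.

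The heart of the proof is the lower bound, which I would establish scale by scale. Fix $x,y$ and set $\Delta=d_X(x,y)$. Define an increasing sequence $0=r_0<r_1<\dots$ by letting $r_t$ be the least radius for which both balls $B(x,r_t)$ and $B(y,r_t)$ contain at least $2^t$ points, truncating at the first scale with $r_t\geq \Delta/2$. At each such scale a random set of density $2^{-t}$ meets the inner ball $B(x,r_{t-1})$ (which has at least $2^{t-1}$ points) with at least constant probability, while simultaneously missing the open ball around $y$ of radius just below $r_t$ (which has fewer than $2^t$ points), again with at least constant probability; these two events together force $\abs{d_X(x,A)-d_X(y,A)}\geq r_t-r_{t-1}$, so the good event occurs with probability bounded below by an absolute constant $p>0$, uniformly in $t$.

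To promote this to a deterministic embedding valid for all pairs at once, I would run the concentration through the $N$ repetitions rather than through the real-valued sum. For a fixed pair and scale $t$, the number of the $N$ repetitions whose sets realize the good event is a Binomial variable with mean $\bigOmega{N}$, so by a Chernoff bound at least half that many succeed except with probability $e^{-\bigOmega{N}}$. Choosing $N$ a large enough multiple of $\log n$ and union-bounding over the $L$ scales and the $\binom{n}{2}$ pairs makes the total failure probability $\lilOh{1}$, fixing a single admissible choice of all the $A_{t,q}$. On the resulting event, scale $t$ contributes at least $\bigOmega{N}(r_t-r_{t-1})^2$; summing over scales and applying Cauchy--Schwarz to $\sum_t(r_t-r_{t-1})$, which telescopes to $\bigOmega{\Delta}$, yields $\|g(x)-g(y)\|_2^2\geq \bigOmega{(N/L)\Delta^2}=\bigOmega{\Delta^2}$ for every pair simultaneously, hence $\|g(x)-g(y)\|_2\geq \bigOmega{\Delta}$. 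Multiplying $g$ by the fixed factor $\log n$ then rescales both estimates into exactly the stated form $(c\log|X|)\,d_X(x,y)\leq \|g(x)-g(y)\|_2\leq (C\log^2|X|)\,d_X(x,y)$, with distortion $\bigOh{\log|X|}$.

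The main obstacle is the lower bound's scale argument: one must choose the radii $r_t$ so that at each scale the two competing events — a random set of the correct density hitting the smaller ball yet avoiding the larger one — each occur with probability bounded below by an absolute constant independent of $t$, and one must check that truncating at $r_t\approx\Delta/2$ still leaves $\bigOmega{\log n}$ usable scales for points that are genuinely far apart. Getting these quantifiers right, and then ensuring the per-scale Chernoff estimate survives the union bound over all pairs, is where the real work lies; the Lipschitz upper bound, the dimension count, and the final rescaling are routine by comparison.
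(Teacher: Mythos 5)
The paper itself gives no proof of this statement: it is Bourgain's embedding theorem, imported as a known tool with citations to Bourgain's original paper and to Magen's survey, so there is no internal argument to compare against. What you have written is a reconstruction of the standard randomized proof (Bourgain's argument in the form popularized by Linial, London and Rabinovich): Fr\'echet-type coordinates $d_X(\cdot,A_{t,q})$ at $L=\Theta(\log n)$ geometric densities with $N=\Theta(\log n)$ independent repetitions each, the $1$-Lipschitz upper bound giving $\sqrt{LN}=O(\log n)$ before rescaling, the per-scale hit-one-ball/miss-the-other event of constant probability, Chernoff concentration over the repetitions, a union bound over all pairs and scales to fix a single realization, and Cauchy--Schwarz applied to the telescoping radii $r_t$; rescaling by $\log n$ then yields exactly the normalization in the statement. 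This is correct in outline and is, in substance, the proof in the cited literature, so your proposal fills in what the paper outsources. Three details to tighten when writing it out. First, minimality of $r_t$ only guarantees that \emph{one} of the two open balls of radius $r_t$ (around $x$ or around $y$) has fewer than $2^t$ points, so which point is ``hit'' and which is ``missed'' may swap from scale to scale; this is harmless because $\abs{d_X(x,A)-d_X(y,A)}$ is symmetric, but the argument should say so rather than always hitting near $x$. Second, at the deepest scales ($2^{-t}\approx 1/n$) the random set can be empty, so you need a convention (e.g., the coordinate is identically $0$ when $A_{t,q}=\emptyset$) that preserves the Lipschitz bound. Third, your closing worry about retaining $\Omega(\log n)$ usable scales is unnecessary: Cauchy--Schwarz only requires that the number of scales be \emph{at most} $L$, and what matters is that the truncated telescoping sum reaches $\Delta/2$, however few scales contribute; the factor $N/L=\Theta(1)$ then comes out regardless.
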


We note that the constants $c, C$ are independent of the metric space $X$. This result can be useful for analyzing expansion in regular graphs, in particular because we have the following theorem from \cite{hoory2006expander} regarding embeddings of expanders into Euclidean spaces of arbitrary dimension. 

\begin{lemma}[Theorem 13.9 in \cite{hoory2006expander}]\label{T:Hilbert}
Let $G$ be a $k$-regular graph with first non-trivial Laplacian eigenvalue $\lambda_1$. Then for every $s>0$ and every function $f:V(G)\to \R^s$, we have
\[  \frac{\lambda_1}{n^2}\sum_{u, v\in V(G)}\|f(u)-f(v)\|_2^2\leq \frac{1}{nd}\sum_{u\sim v} \|f(u)-f(v)\|_2^2.\]
\end{lemma}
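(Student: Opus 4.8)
The plan is to reduce the statement to the scalar-valued case and then invoke the variational (Rayleigh-quotient) characterization of $\lambda_1$; here I take the regularity to be $d$, consistent with the displayed inequality. The first step is to observe that both sides split additively across the $s$ coordinates of $f$: writing $f=(f_1,\dots,f_s)$ with each $f_j:V(G)\to\R$, we have $\|f(u)-f(v)\|_2^2=\sum_{j=1}^s (f_j(u)-f_j(v))^2$, so each sum over pairs $u,v$ and each sum over edges is just the sum of the corresponding scalar expressions. Hence it suffices to prove the inequality for a single real-valued function $f:V(G)\to\R$, and the vector case follows by summing the scalar inequalities over $j$.

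For the scalar case the idea is to rewrite both quantities using the orthogonal decomposition $f=\bar f\,\one+g$, where $\bar f$ is the average value of $f$ and $g\perp\one$. First I would compute the global sum
\[\sum_{u,v\in V(G)}(f(u)-f(v))^2 = 2n\sum_{u}f(u)^2 - 2\Big(\sum_u f(u)\Big)^2 = 2n\|g\|_2^2,\]
using that the constant component cancels in every difference. For the edge sum, since $G$ is $d$-regular the combinatorial Laplacian is $L=dI-A=d\mathcal{L}$, it annihilates $\one$, and $\sum_{u\sim v}(f(u)-f(v))^2 = 2\,g^\top L g$, the factor $2$ reflecting the ordered-pair convention in $\sum_{u\sim v}$ (consistent with the derivation of inequality (\ref{upperbd})).

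The genuine content is then the eigenvalue bound. Because $g$ is orthogonal to $\one$, which spans the kernel of $\mathcal{L}$, the min–max characterization of the normalized-Laplacian spectrum gives $g^\top\mathcal{L}g\geq\lambda_1\|g\|_2^2$, that is $g^\top L g\geq d\lambda_1\|g\|_2^2$. Combining the three computations,
\[\sum_{u\sim v}(f(u)-f(v))^2 = 2\,g^\top L g \geq 2 d\lambda_1\|g\|_2^2 = \frac{d\lambda_1}{n}\sum_{u,v\in V(G)}(f(u)-f(v))^2,\]
and dividing through by $nd$ yields exactly the claimed inequality in the scalar case; summing over the $s$ coordinates completes the proof.

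Since this is a standard spectral fact, I do not anticipate a serious obstacle. The only points requiring care are the bookkeeping conventions — whether $\sum_{u\sim v}$ runs over ordered pairs or over edges, and the normalization $L=d\mathcal{L}$ that holds precisely because $G$ is regular — together with confirming that orthogonality to $\one$ is exactly what licenses passing from the Rayleigh quotient to the bound $g^\top\mathcal{L}g\geq\lambda_1\|g\|_2^2$. These constants must be tracked so that the factors of $2$, $n$, and $d$ cancel correctly and reproduce the stated $\lambda_1/n^2$ versus $1/(nd)$ normalization.
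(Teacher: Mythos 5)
Your proof is correct. Note that the paper does not prove this lemma at all --- it is imported by citation (Theorem 13.9 of the Hoory--Linial--Wigderson survey) --- and your argument, splitting $\|f(u)-f(v)\|_2^2$ across the $s$ coordinates and then applying the Rayleigh-quotient bound $g^\top\mathcal{L}g\geq\lambda_1\|g\|_2^2$ to the centered part of each scalar coordinate, is precisely the standard proof of that cited result, with the constants tracked correctly (in particular, you rightly insist that $\sum_{u\sim v}$ and $\sum_{u,v}$ use the same ordered-pair convention, since the inequality fails if the conventions are mixed).
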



\section{Typical Functions}\label{typicalfunction}
In this section we prove Theorems \ref{T:typicalfixed},
\ref{T:typical}, and \ref{T:typicaldinf}. We begin with the proof of
Theorem \ref{T:typicalfixed}, which shall rely on Theorem \ref{T:Bourgain}.

\begin{proof}[Proof of Theorem \ref{T:typicalfixed}]
Let $H$ be a fixed $d$-regular graph with $|V(H)|=m$. Let $g:V(H)\to \R^K$ be the embedding guaranteed by Theorem \ref{T:Bourgain}.

Let $G\sim \mathcal{G}_{n, d}$, and let $f:V(G)\to V(H)$ be any
function. Define $F =g\circ f:V(G)\to \R^K$. By Theorem
\ref{randomeigen} and Lemma \ref{T:Hilbert} we have that
\[\frac{\frac{1}{nd}\sum_{u\sim_Gv}\|F(u)-F(v)\|^2}{\frac{1}{n^2}\sum_{u,
    v}\|F(u)-F(v)\|^2}\geq \alpha~,\]
where $\alpha = 1 - \frac{C}{\sqrt{d}}>0$.

Now, let us consider the function $f$. We have
\begin{eqnarray*}
\frac{1}{n^2}\sum_{u, v\in V(G)}d_H^2(f(u), f(v)) & \leq & \frac{1}{n^2} \sum_{u, v\in V(G)} \left(\frac{\|g(f(u))-g(f(v))\|_2}{c\log m}\right)^2\\
& \leq & \frac{1}{c^2\log^2m}\left(\frac{1}{n^2} \sum_{u, v\in V(G)} \|F(u)-F(v)\|_2^2   \right)\\
& \leq & \frac{1}{\alpha c^2\log^2m}\left(\frac{1}{nd}\sum_{u\sim_Gv}\|F(u)-F(v)\|_2^2\right)\\
& = & \frac{1}{\alpha c^2\log^2m} \left(\frac{1}{nd}\sum_{u\sim_Gv}\|g(f(u))-g(f(v))\|_2^2\right)\\
& \leq & \frac{1}{\alpha c^2\log^2m} \left(\frac{1}{nd}\sum_{u\sim_Gv}(C\log^2m d_H(f(u), f(v)))^2\right)\\
& = & \frac{C^2\log^2m}{\alpha c^2} \left(\frac{1}{nd}\sum_{u\sim_Gv}d_H(f(u), f(v))^2\right).
\end{eqnarray*}

Hence, for all $f:V(G)\to V(H)$, we have $\gamma(G, d_H, f)\leq \frac{C^2\log^2m}{\alpha c^2}$, which is constant with respect to $n$. Hence, $\gamma(G, d_H)$ is bounded above by a constant for all $n$, yielding the result.

\end{proof}



We now turn our attention to Theorems \ref{T:typical} and \ref{T:typicaldinf}. To prove these two theorems, we adapt the technique used in Proposition 8.1 of \cite{mendel2014expanders}. As the proof technique for both is similar, we shall combine these into one proof.

\begin{proof}[Proof of Theorems \ref{T:typical}, \ref{T:typicaldinf}]
Let $G\in \mathcal{G}_{n, d}$ and $H\in\mathcal{G}_{m, d}$. We wish to show that there exists a constant $C$ such that, with high probability, for any $f\in\mathcal{F}(\delta_m)$, we have
\begin{equation}\label{defprop}\frac{1}{n^2}\sum_{u, v\in V(G)} d_H(f(u), f(v)))^2 \geq \frac{C}{dn}\sum_{\{u, v\}\in E(G)} d_H(f(u), f(v))^2.\end{equation}

Note that the left hand side a.a.s.\ satisfies
\[ \frac{1}{n^2}\sum_{u, v\in V(G)} d_H(f(u), f(v)))^2 \leq \frac{1}{n^2}(n^2\diam H^2)=\diam H^2,\]
it suffices to prove that a.a.s.\ the right hand side satisfies
\begin{equation}\label{goal} \frac{1}{n}\sum_{\{u, v\}\in E(G)}
  d_H(f(u), f(v))^2 \gtrsim \diam H^2.\end{equation}

We first fix a graph $H$, and let $D=\diam H$. Fix a function $f:V(G)\to V(H)$ with $f\in \mathcal{F}(\delta_m)$. For now, we leave $\delta_m$ as a variable, as its value will differ depending on the case we consider. Let $\alpha>0$; the precise value of $\alpha$ will be given further on. Define
\[ N_H=\left\{(u, v)\in {[n]\choose 2}\ \vert\ d_H(f(u), f(v))\leq \alpha D\right\}.\]
Note that the number of pairs $(i, j)\in{[m]\choose 2}$ such that $d_H(i, j)\leq \alpha D$ is at most $\frac{m(1-d^{\alpha D+1})}{2-2d}$, as for a given vertex $v$ there are no more than $1+d+d^2+\dots+d^k=\frac{1-d^{k+1}}{1-d}$ vertices at distance at most $k$ from $v$. 

Therefore, as the number of vertices in $[n]$ that map to such a pair under $f$ is at most $n^2/\delta_m^2$, we have
\[|N_H|\leq \frac{n^2}{\delta_m^2}\frac{m(1-d^{\alpha D+1})}{2-2d}.\]

If it were the case that $|E_G\cap N_H|\leq \beta dn$, for some $\beta<\frac{1}{2}$, we would have
\[\frac{1}{n}\sum_{\{u, v\}\in E(G)} d_H(f(u), f(v))^2\geq \frac{1}{n}(1-\beta)dn(\alpha D)^2 \gtrsim D^2,\]
as desired. We will in fact show that for all fixed $H\in \mathcal{G}_{m, d}$, $|E_G\cap N_H|\leq \beta dn$ a.a.s.\ for an appropriate choice of $\beta$, which proves the desired result.

Let $P=[n]\times [d]$. Order the pairs in $P$ arbitrarily, and choose
a random matching $M=\{e_1, e_2, \dots, e_{\frac{nd}{2}}\}$ from $P^2$
sequentially, as in the configurational model. Define $Y_i$ to be a
Bernoulli random variable that takes the value $1$ if $e_i\in N_H$ and
$0$ otherwise. As we choose the $e_i$ sequentially, we will have
$\p(Y_i=1):=p_i \in [0,1]$, such that each $p_i$ depends upon the choice of $e_1, e_2, \dots, e_{i-1}$. However, we have the simple bound 
\[ p_i\leq \frac{|N_H|}{{nd-2(i-1)\choose 2}},\]
as upon the $i^{\textrm{th}}$ choice there are ${nd-2(i-1)\choose 2}$ edges available for $M$. 

Let $1/2>\eta>1/2-\beta$. For $i\leq \eta nd$, we have
\begin{eqnarray*}
p_i & \leq & \frac{|N_H|}{{nd-2(i-1)\choose 2}}\\
& \lesssim & \frac{2\frac{n^2}{\delta_m^2}\frac{m(1-d^{\alpha D+1})}{2-2d}}{(nd-2i)^2}\\
& \leq & \frac{md^{\alpha D}}{\delta_m^2d(d-1)(1-2\eta)^2}=:p,
\end{eqnarray*}
where $p$ is now independent of $i$. Note
that by Theorem \ref{randomdiam}, in our two cases of interest, that
is, if $d$ is either fixed or tending to $\infty$ with $n$, we have
that $D = \bigTheta{\log_d(m)}$. Let $r$ be a constant with $D\leq
r\log_dm$. Then we obtain, with high probability, that
\begin{eqnarray*}
p & = & \frac{m^{1+\alpha r}}{\delta_m^2d(d-1)(1-2\eta)^2}.
\end{eqnarray*} 

We note that some restrictions on $\delta_m$ and $\eta$ will be necessary in order that $p<1$, this shall be further discussed below.

We now produce the standard coupling $X_i$, where 
\[ X_i = \left\{\begin{array}{ll} 1 & \hbox{if }Y_i=1\\ 1 & \hbox{if }Y_i=0 \hbox{ with probability }\frac{p-p_i}{1-p_i}\\ 0 & \hbox{otherwise}\end{array}\right.,\]
so that $\p(X_i=1)=p$, $X_i\geq Y_i$, and the $X_i$ are independent. Thus we have

\begin{eqnarray*}
\p(|E_G\cap N_H|>\beta dn)  & = & \p\left(\sum_{i=1}^\frac{nd}{2} Y_i>\beta dn\right)\\
& \leq & \p\left(\sum_{i=1}^{\eta nd} Y_i>\beta dn - \left(\frac{nd}{2}-\eta nd\right)\right)\\
& \leq & \p\left(\sum_{i=1}^{\eta nd} X_i>nd\left(\beta -\frac{1}{2}+\eta \right)           \right)\\
& \leq & \left(\frac{e^{\frac{d}{p}(\beta -\frac{1}{2}+\eta )-1}}{\left(\frac{d}{p}(\beta -\frac{1}{2}+\eta )\right)^{\frac{d}{p}(\beta -\frac{1}{2}+\eta )}}\right)^{\eta ndp}\\
& = & \left(\frac{e^{d(\beta -\frac{1}{2}+\eta )-p}}{\left(\frac{d}{p}(\beta -\frac{1}{2}+\eta )\right)^{d(\beta -\frac{1}{2}+\eta )}}\right)^{\eta nd}~.
\end{eqnarray*}

In order to use a union-bound argument, we need to approximate the number of functions $f$ which satisfy the condition; a simple-minded (yet fairly accurate) estimate will be the total number of functions from $[n]$ to $[m]$: $m^n$. Hence, it suffices to show that
\begin{equation}\label{finalprob}  \left(\frac{e^{d(\beta -\frac{1}{2}+\eta )-p}}{\left(\frac{d}{p}(\beta -\frac{1}{2}+\eta )\right)^{d(\beta -\frac{1}{2}+\eta )}}\right)^{\eta d}\ll \frac{1}{m}~.\end{equation}

To simplify matters, denote by $c=d(\beta-\frac{1}{2}+\eta)$. Then
\eqref{finalprob} becomes
\[\left(\frac{e^{c-p}}{\left(\frac{c}{p}\right)^{c}}\right)^{\eta d}\ll \frac{1}{m}~.\]

Note that $ \left(\frac{e^{c-p}}{(c/p)^c}\right)\leq
\left(\frac{pe}{c}\right)^{c}$, so it suffices to verify that
$\frac{pe}{c}\ll m^{-1/(c\eta d)}$, i.e., 
\begin{eqnarray} \label{lessthanone}
p & \ll & \frac{c}{e} m^{-1/(c\eta d)}~. 
\end{eqnarray}


It is at this point that the proofs diverge. We first consider two cases, according as whether $d$ is fixed or $d\to \infty$ with $n$.

{\bf Case 1.} $d$ is fixed. Then $d$, and hence $c$, are both constant with respect to $n$, and thus we obtain the necessary condition
\[ \frac{m^{1+\alpha r}}{\delta_m^2}\ll  m^{-1/(c\eta d)}, \hbox{ that
  is, } \delta_m \gg m^{\frac{1}{2}+\frac{1}{2c\eta d}+\frac{\alpha
    r}{2}}\] Also note that \eqref{lessthanone} automatically implies
$p<1$ for $m$ large enough.

Given the constraints on the parameters $\alpha, \beta, \eta,$ and $d$ ($d\geq 3$), we can conclude that the exponent in the right-hand-side above can be made arbitrarily close to $\frac{1}{2}+\frac{2}{d^2}$ by appropriate choices of $\alpha, \beta$ and $\eta$, as $m\to \infty$. Specifically, we may choose
\[\alpha\leq\frac{\eps}{2r}, \quad \beta = \frac{1}{2}-\delta, \quad \eta = \frac{1}{2}-\delta,\]
where $\delta$ is small enough to satisfy to satisfy $\frac{1}{1-6\delta+8\delta^2}\leq 1+\frac{\eps d^2}{8}$, to obtain the condition $\delta_m\gg m^{\frac{1}{2}+\frac{2}{d^2}+\frac{\eps}{2}}$. Note that by definition this will be satisfied by $\delta_m$.

Therefore, for any fixed $H\in \mathcal{G}_{m, d}$, we have that $G$ satisfies (\ref{goal})
a.a.s.. As this holds for all $H\in \mathcal{G}_{m, d}$, the result
follows immediately, and Theorem \ref{T:typical} is proved.

{\bf Case 2.} $d\to \infty$, so we are in the situation of Theorem
\ref{T:typicaldinf}. 

In this case, we proceed a bit more delicately. We pick $\delta$ so
that $d (1- 4 \delta) = 1$, i.e., $\delta = \frac{1}{4}(1 - \frac{1}{d})$, and $\eta =
\beta = \frac{1}{2} - \delta$. As before, we have $c = d(\frac{1}{2}-2\delta)$, and thus by rewriting \eqref{lessthanone}, it is sufficient to show
\[p=\frac{m^{1+\alpha r}}{\delta_m^2d(d-1)4\delta^2}\ll
\frac{d(1-4\delta)}{2e}m^{-4/d^2(1-2\delta)(1-4\delta)} = \frac{1}{2e}
m^{-8/(d+1)}.\]

Pick now $d$ sufficiently large and $\alpha$
sufficiently small so that $m^{\alpha r/2}
\frac{1}{\sqrt{(d-1)(1-4\delta)}}< m^{\epsilon}$ for some
$\epsilon>0$. Then under the condition 
\[\delta_m\gg
\frac{2\sqrt{2e}}{d}m^{\frac{1}{2}+\frac{4}{d+1}+\epsilon},\]
we have
\begin{eqnarray*}
p = \frac{m^{1+\alpha r}}{\delta_m^2d(d-1)4\delta^2} & \ll & \frac{m^{\alpha r-2\eps-8/(d+1)}}{\frac{8e}{d}(d-1)4\delta^2}\\
& < & \frac{d(d-1)(1-4\delta)m^{-8/(d+1)}}{32e(d-1)\delta^2}\\
& = & \frac{m^{-8/(d+1)}}{2e(1-\frac{1}{d})^2} < \frac{m^{-8/(d+1)}}{2e},
\end{eqnarray*}
and hence \eqref{lessthanone} is satisfied.

We note that the only other condition, that $p<1$, is also satisfied here, as the choices of $\delta, \eta, \beta$ ensure that the right hand side of \eqref{lessthanone} is less than 1. Thus, under this condition on $\delta_m$, the result of Theorem \ref{T:typicaldinf} holds.

\end{proof}


\section{Fixed Functions}\label{atypicalfunctions}

In this section we shall prove Theorems \ref{T:atypical} and
\ref{T:atypicalfixed} using discrepancy bounds as in Lemma
\ref{discrepancytwo}. Let $G\in\mathcal{G}_{n, d}$, and let $H$ be a
graph on $m$ vertices. We assume, throughout, that $m\leq n$ and $d$
is constant. Let $f:V(G)\to V(H)$ be a function, with $S_i, s_i$ as defined in Section \ref{notations}.

Now, given $i, j\in[m]$, let $X_{i, j}(G)= e_G(S_i, S_j)$. Note that if $G'$ is obtained from $G$ by switching two edges in the configuration model as described in Section \ref{notations}, then $|X_{i, j}(G)-X_{i, j}(G')|\leq 2$. Therefore, we are in a position to apply the concentration result in Theorem \ref{concentration}. Note that $\e{X_{i, j}} = \frac{ds_is_j}{n}$.

\begin{lemma}\label{errorbound}
Let $1>\eps>0$. With probability at least $1-  \frac{2mn^{\frac{\eps}{2}}}{\sqrt c} \exp\left(-\frac{d}{4}n^{1-\eps}\right) =1-\lilOh{1}$, every pair $i, j\in[m]$ with $s_is_j\geq cn^{2-\eps}$ has \[ \frac{ds_is_j}{n}-dn^{1-\eps/2} \leq e_G(S_i, S_j)\leq \frac{ds_is_j}{n}+dn^{1-\eps/2}, \] so that $e_G(S_i, S_j) = \frac{ds_is_j}{n}(1-\lilOh{1})$.
\end{lemma}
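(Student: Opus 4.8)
The plan is to bound each edge-count $X_{i,j}=e_G(S_i,S_j)$ individually with the switching concentration inequality (Theorem \ref{concentration}) and then union-bound, the crux being to union only over the pairs that actually meet the threshold. Since a single switch alters $X_{i,j}$ by at most $2$ and $\e{X_{i,j}}=\frac{ds_is_j}{n}$ (both noted above), Theorem \ref{concentration} with Lipschitz constant $2$ and deviation $\lambda=dn^{1-\eps/2}$ gives, for each fixed pair,
\[
\prob{\left|X_{i,j}-\tfrac{ds_is_j}{n}\right|\ge dn^{1-\eps/2}}\le 2\exp\!\left(\frac{-\left(dn^{1-\eps/2}\right)^2}{4dn}\right)=2\exp\!\left(-\tfrac{d}{4}n^{1-\eps}\right).
\]
This is exactly the two-sided additive estimate claimed, for a single pair; it remains to make it hold simultaneously.

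The step I expect to be the main obstacle is the counting that produces the factor $\frac{mn^{\eps/2}}{\sqrt c}$ in the failure probability, rather than the naive $\binom{m}{2}$. I would call an index $i$ \emph{heavy} if $s_i\ge \sqrt c\,n^{1-\eps/2}$. As $\sum_i s_i=n$, at most $\frac{n}{\sqrt c\,n^{1-\eps/2}}=\frac{n^{\eps/2}}{\sqrt c}$ indices are heavy. If $s_is_j\ge cn^{2-\eps}=(\sqrt c\,n^{1-\eps/2})^2$, then $\max(s_i,s_j)\ge \sqrt c\,n^{1-\eps/2}$, so at least one of $i,j$ is heavy; pairing each heavy index with all $m$ possible partners bounds the number of threshold pairs by $\frac{mn^{\eps/2}}{\sqrt c}$. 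Because $f$ is fixed, this collection of pairs is deterministic, so a union bound yields failure probability at most $\frac{mn^{\eps/2}}{\sqrt c}\cdot 2\exp(-\tfrac d4 n^{1-\eps})$, the stated quantity. That this is $\lilOh{1}$ follows from $m\le n$: the prefactor is polynomial in $n$ while $\exp(-\tfrac d4 n^{1-\eps})$ decays super-polynomially, since $1-\eps>0$.

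Two loose ends finish the argument. First, Theorem \ref{concentration} is stated on the configuration model $\mathcal P_{n,d}$, while the conclusion concerns simple graphs; since the simplicity event has probability bounded below by a positive constant for fixed $d$ and the conditional law is uniform, conditioning inflates the failure probability by only a constant factor, and on a simple graph $X_{i,j}$ equals the genuine edge count, so the bound transfers to $\mathcal G_{n,d}$ intact. Second, the multiplicative reformulation $e_G(S_i,S_j)=\frac{ds_is_j}{n}(1-\lilOh{1})$ is obtained by dividing the additive bound by the main term: the relative error is $\frac{n^{2-\eps/2}}{s_is_j}$, which becomes lower-order once $s_is_j$ is sufficiently large, as in the regime relevant to the applications to Theorems \ref{T:atypical} and \ref{T:atypicalfixed} (where the typical $s_is_j\approx (n/m)^2$ far exceeds $n^{2-\eps/2}$).
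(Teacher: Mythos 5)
Your proof is correct and follows essentially the same route as the paper's: the identical application of Theorem \ref{concentration} with Lipschitz constant $2$ and deviation $dn^{1-\eps/2}$, the identical counting of ``heavy'' indices with $s_i\geq \sqrt{c}\,n^{1-\eps/2}$ (at most $\frac{n^{\eps/2}}{\sqrt c}$ of them, hence at most $\frac{mn^{\eps/2}}{\sqrt c}$ relevant pairs), and the same union bound with $m\leq n$ giving $1-\lilOh{1}$. Your two added caveats---the transfer from $\mathcal{P}_{n,d}$ to $\mathcal{G}_{n,d}$, and the observation that the multiplicative form $e_G(S_i,S_j)=\frac{ds_is_j}{n}(1-\lilOh{1})$ actually needs $s_is_j\gg n^{2-\eps/2}$ rather than merely $s_is_j\geq cn^{2-\eps}$---are points the paper passes over silently, and the second one correctly identifies a small imprecision in the lemma's ``so that'' clause that is harmless in the regime where the lemma is applied.
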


\begin{proof}
Suppose $s_is_j\geq cn^{2-\eps}$. Then by Theorem \ref{concentration}, we have
\[\p(|X_{i, j}- \e{X_{i, j}}|\geq dn^{1-\eps/2})\leq 2\exp\left(-\frac{(dn^{1-\eps/2})^2}{4dn}\right) = 2\exp\left(-\frac{d}{4}n^{1-\eps}\right),\]
and thus, with probability at least $1-2\exp(-dn^{1-\eps}/4)$, we have $X_{i, j}\geq \frac{ds_is_j}{n}-dn^{1-\eps/2} =\e{e_G(S_i, S_j)}(1-\lilOh{1})$, and likewise for the upper bound.

Moreover, in order that $s_is_j\geq cn^{2-\eps}$, we must have that at least one of $s_i$ or $s_j$ is at least $\sqrt{c}n^{1-\frac{\eps}{2}}$. Note that there are at most $\frac{n}{\sqrt{c}n^{1-\frac{\eps}{2}}} = \frac{1}{\sqrt c} n^{\frac{\eps}{2}}$ such sets, and thus there are at most $m \frac{1}{\sqrt c} n^{\frac{\eps}{2}}$ pairs $i, j\in[m]$ that satisfy the inequality. Therefore, by the union bound, we have simultaneous concentration for all pairs satisfying the inequality with probability at least 
\[1-  \frac{2mn^{\frac{\eps}{2}}}{\sqrt c} \exp\left(-\frac{d}{4}n^{1-\eps}\right).\]
As $m\leq n$, this probability is $1-\lilOh{1}$, as desired. 
\end{proof}

We note that in this theorem the choice of $H$ is irrelevant, and all probabilities depend only on the structure of $G$.

We are now in a position to prove Theorem \ref{T:atypicalfixed}.
\begin{proof}[Proof of Theorem \ref{T:atypicalfixed}]

Let $G, H, f, S_i, s_i$ be as in the above argument. For any pair $i, j\in [m]$, let $\mathcal{E}_{i, j}=ds_is_j/n-e_G(S_i, S_j)$, the error in the approximation to $e_G(S_i, S_j)$ by its expectation. Note that Lemma \ref{errorbound} states that $|\mathcal{E}_{i, j}|= \lilOh{ds_is_j/n}$ whenever $s_is_j\geq cn^{2-\eps}$, with high probability. Moreover, note that for all $i, j$, we have $|\mathcal{E}_{i, j}|\leq \overline{\lambda} d\sqrt{s_is_j}$, by Lemma \ref{discrepancytwo}. Therefore, we have
\begin{eqnarray}
\frac{1}{n}\!\!\sum_{\{u, v\}\in E(G)} \!\!\! d_H^2(f(u), f(v)) 
\!\!& = & \frac{1}{n}\sum_{i, j\in [m]}e_G(S_i, S_j)d_H^2(i, j)\nonumber\\
\!\!& = & \frac{1}{n}\sum_{i, j\in[m]} \left(\frac{ds_is_j}{n} - \mathcal{E}_{i, j}\right) d_H^2(i, j)\nonumber\\
\!\!& \geq & \frac{1}{n}\sum_{i, j\in[m]}\frac{ds_is_j}{n}(1-\lilOh{1})d_H^2(i, j) - \frac{1}{n} \sum_{\substack{i, j\in[m]\nonumber\\ s_is_j<cn^{2-\eps}}} \mathcal{E}_{i, j}d_H^2(i, j)\label{conca}\\
\!\!& \geq & \frac{d}{n^2}(1-\lilOh{1})\sum_{i, j\in[m]}s_is_jd_H^2(i, j)- \frac{1}{n} \sum_{\substack{i, j\in[m]\\s_is_j<cn^{2-\eps}}}\overline{\lambda} d\sqrt{s_is_j}d_H^2(i, j) \label{concb}\\
\!\!& \geq &   \frac{d}{n^2}(1-\lilOh{1})\sum_{u, v\in[n]}d_H^2(f(u), f(v)) - \frac{\overline{\lambda}d\sqrt c}{n}\!\!\!\sum_{\substack{i, j\in[m]\\s_is_j<cn^{2-\eps}}}\!\!\!n^{1-\eps/2}d_H^2(i, j)\nonumber\\
\!\!& \geq &  \frac{d}{n^2}(1-\lilOh{1})\sum_{u, v\in[n]}d_H^2(f(u), f(v)) - \frac{\overline{\lambda}d\sqrt c}{n^{\eps/2}}\sum_{i, j\in[m]}d_H^2(i, j)\nonumber\\
\!\!& \geq &  \frac{d}{n^2}(1-\lilOh{1})\sum_{u, v\in[n]}d_H^2(f(u), f(v))- \frac{\overline{\lambda}d\sqrt c}{n^{\eps/2}}m^2\diam{H}\nonumber.
\end{eqnarray}
Note that line (\ref{conca}) follows from Lemma \ref{errorbound} with
probability at least $1-  \frac{2mn^{\frac{\eps}{2}}}{\sqrt c}
\exp\left(-\frac{d}{4}n^{1-\eps}\right)$ and that every
other step in the computation is deterministic. Thus, the result holds.
\end{proof}

To extend this result to Theorem \ref{T:atypical}, we need Lemma
\ref{randomeigen}
 and that with high probability, the diameter of $H$ is on the order of $\log_d m$ (see Theorem \ref{randomdiam}). Therefore, Theorem \ref{T:atypical} follows immediately by an application of the union bound.

\section{Conclusions and Open Questions}

Although we have obtained several partial results, Jon Kleinberg's
original question stated in the introduction remains open. As noted in
the introduction, and demonstrated in Theorems \ref{T:typicalfixed},
\ref{T:typical}, and \ref{T:typicaldinf}, the behavior of typical
functions and atypical functions can be somewhat different. Based on
calculations of extreme cases of atypical functions, such as functions
in which almost all vertices of $G$ are sent to the same image point,
or in which there are only two nonempty image points in $H$, it seems
that in these cases, both sides of inequality (\ref{defprop}) are
asymptotically 0, whereas for typical functions, both sides of
inequality (\ref{defprop}) are asymptotic to $\diam{H}^2$. 

It is therefore the belief of these authors that the answer to Kleinberg's question will be in the affirmative. The difficulty in resolving the question in its entirety seems to be handling the cases in between, where the function is not typical in the sense of Theorem \ref{T:typical}, but is also not as imbalanced as the cases described above.

In addition, many open problems surround the use of $\gamma(G, d_H)$ (or, equivalently, $\lambda_1(G, d_H)$) for arbitrary graphs $G$ and $H$. Preliminary work of the second author and Christopher Williamson suggests that, considered as $\lambda_1(G, d_H)$, one can glean substantial structural information about $G$ from the constant, as one would from the standard first eigenvalue.

\bibliographystyle{abbrv}  
 \bibliography{bib_items}

\end{document}